\numberwithin{equation}{section}
\newtheorem{theorem}{Theorem}[section]
\newtheorem{proposition}[theorem]{Proposition}
\newtheorem{lemma}[theorem]{Lemma}
\theoremstyle{definition}
\newcommand{\va}{\varepsilon}
\newcommand{\ds}{\displaystyle}
\newcommand{\sik}{\sum_{i=1}^k}
\def\r{\mathbb{R}}
\begin{document}
\title
[ Chern-Simons-Schr\"{o}dinger systems]
{Multi-Peak solutions to Chern-Simons-Schr\"{o}dinger systems with non-radial potential}

\maketitle
\begin{center}
\author{Jin Deng,\ \  Wei Long,\ \ Jianfu Yang }
\footnote{ Email addresses: jindeng\_2016@126.com, lwhope@jxnu.edu.cn, jfyang\_2000@yahoo.com}
\end{center}
\begin{center}

\address{College of Mathematics
and Information Science, Jiangxi Normal University, Nanchang,
Jiangxi 330022, P. R. China }

\end{center}

\begin{abstract}
In this paper, we consider the existence of static solutions to the nonlinear Chern-Simons-Schr\"{o}dinger system
\begin{equation}\label{eqabstr}  \left\{\begin{array}{ll}
-ihD_0\Psi-h^2(D_1D_1+D_2D_2)\Psi+V\Psi=|\Psi|^{p-2}\Psi,\\
\partial_0A_1-\partial_1A_0=-\frac 12ih[\overline{\Psi}D_2\Psi-\Psi\overline{D_2\Psi}],\\
\partial_0A_2-\partial_2A_0=\frac 12ih[\overline{\Psi}D_1\Psi-\Psi\overline{D_1\Psi}],\\
\partial_1A_2-\partial_2A_1=-\frac12|\Psi|^2,\\
\end{array} \right.
\end{equation}
where $p>2$ and non-radial potential $V(x)$ satisfies some certain conditions.
We show  that for every positive integer $k$, there exists $h_0>0$ such that for $0<h<h_0$, problem \eqref{eqabstr} has a nontrivial static solution $(\Psi_h, A_0^h, A_1^h,A_2^h)$. Moreover, $\Psi_h$ is a positive non-radial function with $k$ positive peaks, which approach to the local maximum point of $V(x)$ as $h\to 0^+$.

 {\bf Key words }:  Chern-Simons-Schr\"{o}dinger system, variational method, multi-peak solutions.

{\bf AMS Subject Classifications:} 35J50, 35J10

\end{abstract}

\section{Introduction}
In this paper, we investigate the existence of mutil-peak solutions to Chern-Simons-Schr\"{o}dinger systems. The Schr\"{o}dinger equation
\begin{equation}\label{eq0a}
ih\frac{\partial\Psi(x,t)}{\partial t}=-h^2\Delta\Psi(x,t)+V(x)\Psi(x,t)-|\Psi(x,t)|^{p-2}\Psi(x,t)
\end{equation}
with $p>2$ in $\mathbb{R}^{2}\times\mathbb{R}_+$ can be introduced as the Euler-Lagrange equation of the Lagrange density
\begin{equation}\label{eq0b}
\mathcal{L}=hRe\{i\bar{\Psi}(x,t)\frac{\partial\Psi(x,t)}{\partial t}\}-h^2|\nabla\Psi(x,t)|^2+V(x)|\Psi(x,t)|^2-\frac 2p|\Psi(x,t)|^p,
\end{equation}
where $V(x)$ is the external potential, $h$ is the Plank constant. A static solution, that is a solution $\Psi(x,t)=u(x)$ of \eqref{eq0a} which is independent of $t$, satisfies
the semiclassical Schr\"{o}dinger equation
\begin{equation}\label{eq0c}
-\varepsilon^2\Delta u(x)+V(x)u(x)=|u(x)|^{p-2}u(x),\quad x\in \mathbb{R}^2
\end{equation}
with $\varepsilon=h$.

Taking into account the interaction of the electromagnetic field and the matter field, one includes the Chern-Simons term into the Lagrangian density. The Lagrangian density then becomes
\begin{equation}\label{eq0d}
\begin{split}
\mathcal{L}_c&=\frac{\kappa}4\varepsilon^{\mu\alpha\beta} A_\mu F_{\alpha\beta}-\frac 12hRe\{i\bar{\Psi}(x,t)D_0\Psi(x,t)\}+\frac{h^2}{2}|D\Psi(x,t)|^2\\
&+\frac 12V(x)|\Psi(x,t)|^2-\frac 1p|\Psi(x,t)|^p,\\
\end{split}
\end{equation}
where $\Psi: \r^{2,1}\rightarrow \mathbb{C}$ is the complex scalar field, $A_{\mu}: \r^{2,1}\rightarrow \r, \mu =0,1,2,$ are the gauge field, which obey the Lorentz condition $\sum_{\mu=0}^2\partial_\mu A_\mu=0$. By $D_{0}=\partial_{t}+\frac ih A_{0}$ and $D_{j}=\partial_{x_j}-\frac ihA_{j}, \, j =1,2,$ for $(x_1,x_2,t)\in \r^{2,1}$ we denote the the covariant derivatives,  and we set $F_{\mu\nu}=\partial_\mu A_\nu-\partial_\nu A_\mu$ for $\mu,\nu=0,1,2.$ Inside the Lagrangian density $\mathcal{L}_c$  we denote $i$ the the imaginary unit, and $-\frac{1}4\varepsilon^{\mu\alpha\beta} A_\mu F_{\alpha\beta}$ the Chern-Simons term. The corresponding Euler-Lagrange system of $\mathcal{L}_c$ is given as follows.
\begin{equation}\label{eq0e} \left\{\begin{array}{ll}
-ihD_0\Psi-h^2(D_1D_1+D_2D_2)\Psi+V\Psi=|\Psi|^{p-2}\Psi,\\
\partial_0A_1-\partial_1A_0=-\frac 12ih[\overline{\Psi}D_2\Psi-\Psi\overline{D_2\Psi}],\\
\partial_0A_2-\partial_2A_0=\frac 12ih[\overline{\Psi}D_1\Psi-\Psi\overline{D_1\Psi}],\\
\partial_1A_2-\partial_2A_1=-\frac12|\Psi|^2.\\
\end{array} \right.
\end{equation}
 The systems \eqref{eq0e} is proposed in \cite{d,jp,jp1,jp2}, which describes the dynamics
of large number of particles in an electromagnetic field. This model is important for the study of the high-temperature superconductor,
fractional quantum Hall effect and Aharovnov-Bohm scattering. System \eqref{eq0e} is referred to  be the Chern-Simons-Schr\"{o}dinger system (CSS),
it is invariant under the following gauge transformation
$$
\phi\rightarrow \phi e^{i\chi},\ A_{\mu}\rightarrow A_{\mu}-\partial_{\mu}\chi
$$
for arbitrary $C^{\infty}$ function $\chi: \r^{2,1}\rightarrow\r$.

Since system  \eqref{eq0e} is setting in the whole space, a problem of the loss of the compactness is then raised if the variational method applied. In order to avoid such a problem,  in \cite{bhs} a particular form of solutions of \eqref{eq0e}
$$
\Psi(t,x)=u(|x|)e^{i\omega t}, \ \ A_0(t,x)=h_1(|x|),
$$
$$
A_1(t,x)=\frac{x_2}{|x|^2}h_2(|x|),\ \ A_2(t,x)=\frac{x_1}{|x|^2}h_2(|x|),
$$
is considered  with the constant $V$,  where $\omega>0$ and $u,h_1,h_2$ are real value functions depending only on $|x|$. Then, solutions are found in the radially symmetric space $H_r^1(\mathbb{R}^2)$ as critical points of the associated functional
$$
J(u)=\frac12\int_{\mathbb{R}^2}\bigg\{|\nabla u|^2+(\omega+\xi)u^2+\frac{u^2}{|x|^2}\bigg(\int_0^{|x|}\frac{s}2u^2(s)\,ds\bigg)^2\bigg\}\,dx-\frac{1}{p}\int_{\mathbb{R}^2}|u|^p\,dx.
$$
However, it is quite involved in finding critical points of $J$. Actually, such a problem was treated differently in accordance to the range of the exponent $p$. Precisely in \cite{bhs}, for $p>4$ it is considered a minimization problem on the Nehari-Pohozaev manifold; while for $2<p<4$, minimization problem is constrained in $L^2$ sphere. Essentially,  it is  a nonlinear eigenvalue problem.
For the case $p=4$, a self-dual solution can be found by Liouville equations. Later on, for $p \in(2, 4)$  Pomponio and Ruiz studied in \cite{pr} the
nonexistence and multiplicity results for problem \eqref{eq0e} with constant potentials $V$ by investigating the geometry of the Euler-Lagrange
functional. More related results on problem \eqref{eq0e} with constant potentials can be found
in \cite{bhs,bhs1,cdpg,h,pr,pr1} and references therein. Suppose $V$ is radially symmetric, it is studied in \cite{wt0} the existence, nonexistence and multiplicity of the same type of solutions for \eqref{eq0e}. While for the case $V$ being non-radial, nontrivial solutions are found in \cite{wt} under the assumption $p>4$.

In this paper, we consider the existence and concentration of solutions for \eqref{eq0} under the assumption that $V(x)$ is non-radial and $p>2$. Since $V(x)$ is not radially symmetric, we encounter the difficulty of the loss of compactness. Instead of working in the radially symmetric space $H_r^1(\mathbb{R}^2)$, one has to use the concentration-compactness principle if minimization problem considered.  Furthermore, the presence of $V$ brings essential difficulty to find minimizers of associated functional on the Nehari-Pohozaev manifold, it is also the reason hinders the study of the case $p\in(2,4]$ although there are existence results emerged for the case $p>4$. Our approach is to use the Lyapunov-Schmidt reduction method. By this method, we can not only  construct  mutil-peak solutions to problem \eqref{eq0e}, but also treat all the case $p>2$ in a unified way.

In this paper, we study static solutions of \eqref{eq0e}. Hence, the gauge field $(A_1,A_2)$ obeys the Coulomb condition $\partial_1A_1+\partial_2A_2=0$. Moreover, a static solution $(u, A_0, A_1,A_2)$ satisfies
\begin{equation}\label{eq0} \left\{\begin{array}{ll}
-\varepsilon^2\Delta u +V(x)u+ A_0u+(A_1^2+A_2^2)u = |u|^{p-2}u,\\
\partial_1A_0=A_2u^2,\ \partial_2A_0=-A_1u^2\\
\partial_1A_2-\partial_2A_1=-\frac12|u|^2,\ \partial_1A_1+\partial_2A_2=0,
\end{array} \right.
\end{equation}
where we set $\varepsilon =h>0$.

Suppose the external potential $V(x)$ satisfies
\begin{itemize}
\item[($V_1$)]  $V(x) \in C(\r^2,\r)$, $\inf_{x\in\r^2}V(x)>0$, and there exist positive constants $L$ and $\theta$ such
that $|V(x)-V(y)|\leq L|x-y|^{\theta}$ for all $x,\ y\in
\r^2$;
\item[($V_2$)] There exist $\delta>0$ and $x^0\in \r^2$ such that
$V(x)<V(x^0)$ for $x\in B_{\delta}(x^0)\setminus
\{x^0\}\subset\r^2$.
\end{itemize}

\bigskip

The main result of this paper is  as follows.
\begin{theorem}\label{th1}
 Suppose that $p>2$ and $V(x)$ satisfies $(V_1)$ and $(V_2)$. Then for any positive integer $k$, there exists $\va_0>0$ such that if $0<\va<\va_0$, problem \eqref{eq0} has a nontrivial solution $(u_\varepsilon, A_0^\varepsilon, A_1^\varepsilon,A_2^\varepsilon)$ such that $u_{\varepsilon}$ is positive non-radial function with k positive peaks, which approach to the local maximum point of $V(x)$ as $\va\to 0^+$.
\end{theorem}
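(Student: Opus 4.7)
The plan is to reformulate \eqref{eq0} as a single nonlocal Schr\"odinger-type equation in $u$ and then construct multi-peak solutions by a Lyapunov-Schmidt finite-dimensional reduction around a sum of ground-state bumps concentrating near $x^0$. First I would eliminate the gauge potentials. The Coulomb condition $\partial_1 A_1+\partial_2 A_2=0$ together with $\partial_1 A_2-\partial_2 A_1=-\tfrac{1}{2}|u|^2$ determines $(A_1,A_2)=(-\partial_2\chi,\partial_1\chi)$ with $\Delta\chi=-\tfrac{1}{2}|u|^2$, while the remaining equations determine $A_0[u]$ (up to an additive constant) through the 2D Green's function. Substituting back produces a single nonlocal equation
\begin{equation*}
-\va^2\Delta u+V(x)u+F_\va[u]\,u=|u|^{p-2}u,\quad x\in\r^2,
\end{equation*}
where $F_\va[u]:=A_0[u]+A_1[u]^2+A_2[u]^2$. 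After the semiclassical scaling $v(y):=u(\va y)$ the equation becomes $-\Delta v+V(\va y)v+\widetilde F_\va[v]\,v=|v|^{p-2}v$ on $\r^2$, and the Chern-Simons piece $\widetilde F_\va[v]$ is of lower order relative to the Schr\"odinger part on concentrated $v$.

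Next I set up the reduction. Let $U$ denote the unique positive radial ground state of $-\Delta U+V(x^0)U=U^{p-1}$ on $\r^2$, which is non-degenerate in the standard sense that the kernel of $L_0:=-\Delta+V(x^0)-(p-1)U^{p-2}$ in $H^1$ equals $\mathrm{span}\{\partial_{y_1}U,\partial_{y_2}U\}$. For peak configurations
\begin{equation*}
\bar x=(x_1,\dots,x_k)\in \Lambda_\va:=\Big\{\bar x\in B_\delta(x^0)^k:\ |x_i-x_j|\geq \va|\ln\va|^2,\ i\neq j\Big\},
\end{equation*}
set $W_{\va,\bar x}(x):=\sum_{i=1}^k U\bigl(\tfrac{x-x_i}{\va}\bigr)$ and look for a solution $u=W_{\va,\bar x}+\phi$ with $\phi$ orthogonal, in the $\va$-rescaled $H^1$ inner product, to the $2k$-dimensional tangent space spanned by $\partial_{x_i^\ell}W_{\va,\bar x}$. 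Non-degeneracy of $U$ plus the separation built into $\Lambda_\va$ gives invertibility of the linearized operator on this orthogonal complement with norm uniform in $(\va,\bar x)$; a contraction argument that exploits the H\"older continuity in $(V_1)$ and the subdominance of $\widetilde F_\va$ then yields a unique small correction $\phi=\phi_{\va,\bar x}$.

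The problem reduces to critical points of $M_\va(\bar x):=J_\va(W_{\va,\bar x}+\phi_{\va,\bar x})$ on $\Lambda_\va$, where $J_\va$ is the energy of the scalar equation. A standard expansion gives
\begin{equation*}
M_\va(\bar x)=k\va^2 I_0+\va^2\sum_{i=1}^k[V(x_i)-V(x^0)]\!\int_{\r^2}\!U^2\,dy-\va^2\sum_{i\neq j}\omega\!\Bigl(\tfrac{|x_i-x_j|}{\va}\Bigr)+o(\va^{2+\theta}),
\end{equation*}
with $I_0$ the limiting ground-state energy, $\theta$ the H\"older exponent, and $\omega$ a positive exponentially-decaying interaction kernel. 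I would then maximize $M_\va$ over $\overline{\Lambda_\va}$. On boundary configurations with some $x_i\in\partial B_\delta(x^0)$, condition $(V_2)$ forces a strict decrease of the $V$-term; on configurations with two peaks at the minimal allowed distance, the repulsive $\omega$-term strictly penalizes collisions. Hence the maximizer is interior, the Lagrange multipliers associated with the $2k$ orthogonality conditions vanish by a standard argument exploiting the span of $\partial_{x_i^\ell}W_{\va,\bar x}$, and we obtain a genuine critical point of $J_\va$, giving a positive multi-peak static solution of \eqref{eq0} with peaks $x_i^\va\to x^0$.

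The main technical obstacle is the quantitative control of the nonlocal Chern-Simons functional $F_\va[u]$ around the multi-peak ansatz: the 2D logarithmic kernel in $\chi$ sits on the borderline of integrability, so appropriate weighted norms and a careful analysis of peak-to-peak interactions mediated by $F_\va$ are needed to ensure that these nonlocal contributions stay strictly subleading compared with $\omega(|x_i-x_j|/\va)$. Once this is in place, the sign structure of the expansion of $M_\va$ is preserved uniformly, and the interior maximum argument closes and yields the theorem for all $p>2$ in a unified way.
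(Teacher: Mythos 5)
Your proposal follows essentially the same route as the paper: eliminate the gauge fields via the Coulomb condition and the 2D Green's function to get a single nonlocal functional, perform a Lyapunov--Schmidt reduction around $\sum_i U((x-x_i)/\va)$ using the non-degeneracy of $U$ and a contraction argument, and then locate an interior maximizer of the reduced energy over well-separated configurations near $x^0$ by playing the $V$-term from $(V_2)$ against the exponentially decaying interaction term. The differences (rescaled variable versus $\va$-weighted norms, the precise separation threshold $\va|\ln\va|^2$ versus $\va|\ln\va|^{1/2}$) are cosmetic, and your identification of the borderline logarithmic kernel as the main technical point matches where the paper invests its estimates.
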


The existence of peak solutions and the concentration phenomenon of solutions has been extensively studied for the semiclassical Shr\"{o}dinger equation \eqref{eq0c}, see for instance \cite{ABC, AMN, PF, PF1, WZ} for part of results in this direction. In particular, positive solutions with prescribed number of peaks to nonlinear
Schr\"{o}dinger equations were obtained in \cite{kw} by using the well-known Lyapunov-Schmidt reduction scheme. This argument  was later
generalized in \cite{cp} to study  nonlinear Schr\"{o}dinger equations with vanishing potentials. However, it seems no multi-peaks solution has been found for problem \eqref{eq0} in literatures.

 We will find solutions of problem \eqref{eq0} by looking for critical points of the associated functional
\begin{equation} \label{2-function}
\begin{split}
J_{\va}(u,A_0,A_1,A_2)&=\frac12\int_{\r^2}\big(\va^2|\nabla u|^2+V(x)|u|^2+(A_0+A_1^2+A_2^2)|u|^2\big)\,dx\\
&+\frac 12\int_{\r^2}(A_0F_{12}+A_1\partial_2A_0-A_2\partial_1A_0)\,dx-\frac1p\int_{\r^2}|u|^p\,dx.\\
\end{split}
\end{equation}
Such a problem can be reduced, see section 2 for details,  to find critical points of the functional
\begin{equation} \label{2-function-1}
\begin{split}
I_{\va}(u) = &  \frac{1}{2}\int_{\mathbb{R}^{2}} \bigl(\varepsilon^{2} |\nabla u|^{2} + V(x) u^{2}\bigr) dx + \frac12 \int_{\mathbb{R}^{2}} \biggl(- \frac{1}{4\pi} \int_{\mathbb{R}^{2}} \frac{x_{2} - y_{2}}{|x- y|^{2}} u^{2}(y) dy\biggr)^{2} u^{2}(x) dx \\
& + \frac12 \int_{\mathbb{R}^{2}} \biggl( \frac{1}{4\pi} \int_{\mathbb{R}^{2}} \frac{x_{1} - y_{1}}{|x- y|^{2}} u^{2}(y) dy\biggr)^{2} u^{2}(x) dx -\frac{1}{p} \int_{\mathbb{R}^{2}} |u|^{p} dx.
\end{split}
\end{equation}
Using the unique ground state $U$  of
\begin{equation} \label{eq1} \left\{\begin{array}{l@{\quad }l}
 -\Delta u+V(x^0) u=u^{p-1},\ u>0,\ x \in \r^2,\vspace{2mm} \\
  u(0)=\max\limits_{\r^N}u(x),\quad u\in H^1(\r^2),
\end{array} \right.
\end{equation}
we build up the approximate critical points for the functional $I_\va$.
It is well-known that  $U(x)=U(|x|)$ is non-degenerate and  satisfies
\begin{equation} \label{eq1a}
 U'(r)<0, \,\, \lim_{r\rightarrow \infty}r^{\frac{N-1}{2}}e^rU(r)=C>0,\,\,
\lim_{r\rightarrow \infty}\frac{U'(r)}{U(r)}=-1.
\end{equation}

Let $k$ be any positive integer.  Define
\[
\begin{split}
D_{k}^{\varepsilon,\delta}=&\Bigl\{{\bf {y}}=(y^1,\cdots,y^k)\in (\r^2)^k: y^i\in B_{\frac{\delta}{2}}(x^0),\quad {\rm and}\\
 &\quad\frac{|y^i-y^j|}{\va}\geq|\ln\va|^{\frac12},\ i\neq j,\,i,j=1,2\cdots,k\Bigr\}.\\
\end{split}
\]
Let $U_{\va, y^i}(x) =U(\frac{x-y^i}{\va})$ and
\[
E:=\Bigl\{\varphi\in H^1(\r^2): \langle\frac{\partial U_{\va, y^i}}{\partial y_l^i} \varphi\rangle_{\va}=0,\ i=1,\cdots,k,\ l=1,2\Bigr\},\\
\]
where
\begin{eqnarray*}
\langle v_1,v_2\rangle_{\va}:=\va^2\int_{\r^2}\nabla v_1 \nabla v_2\,dxdy+\int_{\r^2}v_1(x)v_2(x)dx.
\end{eqnarray*}
Fixing ${\bf {y}} \in D_k^{\va,\delta}$, we set
$$
U_{*}=\sum\limits_{i=1}^kU_{\va,y^i}=\sum\limits_{i=1}^kU(\frac{x-y^i}{\va})
$$
and
$$
M_k^\va=\Bigl\{( {\bf {y}}, \varphi):\,\,{\bf {y}} \in D_k^{\va,\delta},\,\,\,\,\varphi\in E\Bigr\}.
$$
Then, Theorem \ref{th1} will be proved by the following result.
\begin{theorem}\label{th2}
If $V(x)$ satisfies $(V_1)$ and $(V_2)$, then for any positive integer $k$, there is a positive constant $\va_0$ depending on $k$ such that for each $\va\in (0,\va_0],$
the functional $I_\va$ has a positive critical point of the form
 $$
 u_\va(x)=U_*+\varphi_\va=\sum\limits_{i=1}^kU_{\va,y^i}+\varphi_\va,
 $$
 where $\varphi_\varepsilon\in E$.
\end{theorem}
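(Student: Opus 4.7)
The plan is to carry out a Lyapunov-Schmidt finite-dimensional reduction with the concentration parameter ${\bf y}\in D_k^{\va,\delta}$. For each ${\bf y}$ I will produce a correction $\varphi=\varphi_\va({\bf y})\in E$ such that $u = U_* + \varphi$ satisfies $\nabla I_\va(u)\in T:=E^\perp=\mathrm{span}\{\partial_{y_l^i}U_{\va,y^i}\}$, and then I will find a critical point of the reduced functional $\bar I_\va({\bf y}):=I_\va(U_*+\varphi_\va({\bf y}))$ in the interior of $D_k^{\va,\delta}$. Assumption $(V_2)$, which makes $x^0$ a strict local maximum of $V$, together with the separation constraint built into $D_k^{\va,\delta}$, is what will guarantee that the maximiser is interior.

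Step 1 (analytical half). First, write $I_\va''(U_*) = L_0 + L_{\mathrm{CS}}$, where $L_0$ is the local Schr\"odinger linearisation and $L_{\mathrm{CS}}$ comes from differentiating the quartic Chern-Simons terms in \eqref{2-function-1}. The non-degeneracy of $U$ in $H^1(\r^2)$ (coming from \eqref{eq1a}), the exponential decay of $U$, and the separation $|y^i-y^j|/\va\geq|\ln\va|^{1/2}$ imply that $L_0$, restricted to $E$, is uniformly invertible; $L_{\mathrm{CS}}$ is a small perturbation by Hardy-Littlewood-Sobolev type estimates after $\va$-rescaling. Hence $L_\va := P_E I_\va''(U_*)P_E$ is invertible with uniform bound. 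Second, estimate the residual $R_\va := -P_E\nabla I_\va(U_*)$: using the H\"older condition on $V$ from $(V_1)$ and expanding $V(x)-V(y^i)$ on the support of $U_{\va,y^i}$ gives $\|R_\va\| = O(\va^{1+\theta})$, with cross-bump terms decaying like $e^{-\sigma|\ln\va|^{1/2}}$ and the Chern-Simons residual controlled by weighted $L^p$ bounds on the Biot-Savart convolutions. Third, writing the orthogonality equation as $\varphi = L_\va^{-1}(R_\va - N_\va(\varphi))$ with $N_\va$ the super-quadratic remainder, a contraction-mapping argument on a ball of radius $O(\|R_\va\|)$ in $E$ produces a unique $\varphi_\va({\bf y})$ depending smoothly on ${\bf y}$.

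Step 2 (variational half). I will then expand
\begin{equation*}
\bar I_\va({\bf y}) = k A_0 \va^2 + \tfrac{\va^2}{2}\|U\|_{L^2(\r^2)}^2 \sum_{i=1}^k V(y^i) + o(\va^2),
\end{equation*}
where $A_0=\tfrac12\|\nabla U\|_{L^2}^2-\tfrac1p\|U\|_{L^p}^p$ and both the Chern-Simons self-energy and the bump-bump interactions enter at order $o(\va^2)$ (the former because of the $\va$-scaling of the Biot-Savart kernels against the localised $U_{\va,y^i}^2$, the latter because of the exponential decay forced by the separation $|\ln\va|^{1/2}$). Maximising $\bar I_\va$ over the closure of $D_k^{\va,\delta}$ yields a point ${\bf y}_\va$; assumption $(V_2)$ keeps each $y^i_\va$ strictly inside $B_{\delta/2}(x^0)$, while collapse onto the boundary $|y^i_\va-y^j_\va|/\va=|\ln\va|^{1/2}$ is excluded because the leading ${\bf y}$-dependence comes from $V$ and the inter-bump interaction is subdominant. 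Hence ${\bf y}_\va$ is an interior critical point of $\bar I_\va$, and by the reduction $u_\va = U_* + \varphi_\va({\bf y}_\va)$ is a genuine critical point of $I_\va$. Positivity follows from testing the Euler-Lagrange equation with $u_\va^{-}$ and applying the strong maximum principle, using that $u_\va$ is a nontrivial small $H^1$-perturbation of $U_*>0$.

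The principal obstacle is the treatment of the nonlocal Chern-Simons terms throughout Step 1 --- obtaining the correct $\va$-scaling of $L_{\mathrm{CS}}$, of the Chern-Simons contribution to $R_\va$, and of the Chern-Simons correction to the bump-bump interaction in the expansion of $\bar I_\va$ --- because the Biot-Savart kernels $(x_j-y_j)/|x-y|^2$ have only $|x|^{-1}$ decay, so sharp weighted Hardy-Littlewood-Sobolev bounds must be combined with the exponential decay of $U$ to extract the correct orders in $\va$. This is the technical heart of the argument and the reason standard Lyapunov-Schmidt reductions for semilinear Schr\"odinger equations as in \cite{kw,cp} do not apply directly.
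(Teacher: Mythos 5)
Your overall strategy coincides with the paper's: reduce to the single functional $I_\va$, invert the projected linearization on $E$ (treating the Chern--Simons quadratic form as an $O(\va^2)$ perturbation of the Schr\"odinger linearization, exactly as in Lemma \ref{lm3.4}), solve for $\varphi_\va(\mathbf{y})$ by contraction, and maximize the reduced functional over $\overline{D_k^{\va,\delta}}$. Step 1 is essentially sound, modulo one quantitative slip: the residual is not $O(\va^{1+\theta})$ uniformly on $D_k^{\va,\delta}$; as in Lemma \ref{lm3.2} it also carries the terms $\va\sum_i|V(y^i)-V(x^0)|$ and $\va e^{-\min\{\frac{p-1}{2},1\}|y^i-y^j|/\va}$, which can dominate $\va^{1+\theta}$ when the $y^i$ are at distance $O(1)$ from $x^0$ or close to the separation boundary. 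This must be tracked, since $\|\varphi_\va\|$ feeds back into the reduced energy.

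The genuine gap is in Step 2. Your expansion $\bar I_\va(\mathbf{y})=kA_0\va^2+\tfrac{\va^2}{2}\|U\|_{L^2}^2\sum_iV(y^i)+o(\va^2)$, with the bump--bump interaction absorbed into $o(\va^2)$, cannot rule out that the maximizer sits on the separation boundary $|y^i-y^j|/\va=|\ln\va|^{1/2}$: on that boundary all $y^i$ may lie within $O(\va|\ln\va|^{1/2})$ of $x^0$, so the $V$-term distinguishes nothing at order $\va^2$ and the $o(\va^2)$ error swamps any signal. Your stated reason --- that collapse is excluded ``because the inter-bump interaction is subdominant'' --- is exactly backwards: if the interaction were negligible there would be nothing pushing the maximizer off that boundary. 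What is needed, and what Proposition \ref{energy-prop} and the proof of Theorem \ref{th2} supply, is to keep the interaction explicitly as a signed penalty $-C\va^2\sum_{i\neq j}e^{-|y^i-y^j|/\va}$ with $C>0$, with remainder $O\bigl(\va^{2+\theta}+\va^2\sum_{i\neq j}e^{-(1+\sigma)|y^i-y^j|/\va}+\va^4\bigr)$, and then to compare the maximal value with a test configuration $\bar y^i=x^0+M\va|\ln\va|\,e_i$, for which both penalties are $O(\va^{2+\theta}|\ln\va|^\theta)$. This forces $\sum_{i\neq j}e^{-|y^i_\va-y^j_\va|/\va}\le C\va^{\theta}|\ln\va|^{\theta}$ at the maximizer, i.e. $|y^i_\va-y^j_\va|/\va\gtrsim\theta|\ln\va|\gg|\ln\va|^{1/2}$, and it is precisely the inequality $e^{-|\ln\va|^{1/2}}\gg\va^{\theta}|\ln\va|^{\theta}$ that makes the argument close. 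Without isolating the interaction term and its sign, the interiority claim --- and hence the existence of a critical point --- is unproved.
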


To prove Theorem \ref{th2}, we first use Lyapunov-Schmidt reduction scheme to reduce the problem to a variational problem defined on a closed subset of a finite
dimensional Euclidian space. Then we prove that the functional achieves its maximum in the interior of
that closed subset. Comparing with previous works, see for instance \cite{cp,kw}, the Chern-Simons term brings new difficulties in employing
the reduction method to deal with singularly perturbed problem \eqref{eq0},  and the feature of Chern-Simons term requires to establish some new delicate estimates on the energy of the approximate solutions.

This paper is organized as follows. After some preparation in section 2, we expand the functional $I_\va$ at $U_*+\varphi_\va$  and fundamental estimates are established in section 3. Finally, in section 4, we prove Theorem\ref{th2} by the reduction method.

 \section{Preliminaries}
In this section, we present the variational framework and establish estimates for the energy functional $I_\va$ at $U_*$.

By equation \eqref{eq0}, $A_0,A_1$ and $A_2$ can be expressed as functions of $u$. First, integrating by part we find
\begin{equation} \label{2-3}
\begin{split}
J_{\va}(u,A_0,A_1,A_2)&=\frac12\int_{\r^2}\big(\va^2|\nabla u|^2+V(x)|u|^2+(A_0+A_1^2+A_2^2)|u|^2\big)\,dx\\
&+\int_{\r^2}A_0F_{12}\,dx-\frac1p\int_{\r^2}|u|^p\,dx.\\
\end{split}
\end{equation}
Equation \eqref{eq0} implies
\begin{equation} \label{2-3a}
\int_{\r^2}A_0F_{12}\,dx = -\frac 12\int_{\r^2}A_0|u|^2\,dx.
\end{equation}
Hence,
\begin{equation} \label{2-4}
J_{\va}(u,A_0,A_1,A_2)=\frac12\int_{\r^2}\big(\va^2|\nabla u|^2+V(x)|u|^2+(A_1^2+A_2^2)|u|^2\big)\,dx
-\frac1p\int_{\r^2}|u|^p\,dx.\\
\end{equation}

Next, the Coulomb condition $\partial_1A_1+\partial_2A_2=0$ and the equation $\partial_1A_2-\partial_2A_1 = -\frac 12|u|^2$ yield
\begin{equation} \label{2-4a}
\Delta A_1=\frac{1}{2}\partial_2(|u|^2),\ -\Delta A_2=\frac{1}{2}\partial_1(|u|^2).
\end{equation}
Solving equation \eqref{2-4a} we obtain
\begin{equation} \label{2-5}
A_1=A_1(u)=\frac{1}{2}K_2*(\partial_2|u|^2)=-\frac{1}{4\pi}\int_{\r^2}\frac{x_2-y_2}{|x-y|^2}|u(y)|^2\,dy,
\end{equation}
\begin{equation} \label{2-6}
A_2=A_2(u)=-\frac{1}{2}K_1*(\partial_1|u|^2)=\frac{1}{4\pi}\int_{\r^2}\frac{x_1-y_1}{|x-y|^2}|u(y)|^2\,dy,
\end{equation}
where $K_i=\frac{-x_i}{2\pi|x|^2},$ for $i=1,2$ and $*$ denotes the convolution. Similarly, the equation
\[
\Delta A_0=\partial_1(A_2|u|^2)-\partial_2(A_1|u|^2)
\]
implies that
\begin{equation} \label{2-7}
A_0=A_0(u)=K_1*(A_1|u|^2)-K_2*(A_2|u|^2).
\end{equation}
Therefore, the functional $J_\va$ can be written as
\begin{equation} \label{2-function-1}
\begin{split}
I_{\va}(u) = &  \frac{1}{2}\int_{\mathbb{R}^{2}} \bigl(\varepsilon^{2} |\nabla u|^{2} + V(x) u^{2}\bigr) dx + \frac12 \int_{\mathbb{R}^{2}} \bigl(- \frac{1}{4\pi} \int_{\mathbb{R}^{2}} \frac{x_{2} - y_{2}}{|x- y|^{2}} u^{2}(y) dy\bigr)^{2} u^{2}(x) dx \\
& + \frac12 \int_{\mathbb{R}^{2}} \bigl( \frac{1}{4\pi} \int_{\mathbb{R}^{2}} \frac{x_{1} - y_{1}}{|x- y|^{2}} u^{2}(y) dy\bigr)^{2} u^{2}(x) dx -\frac{1}{p} \int_{\mathbb{R}^{2}} |u|^{p} dx.
\end{split}
\end{equation}
We know that $I_{\va}$ is well defined in $H^1(\r^2)$ and $I_{\va}\in C^1(H^1(\r^2))$. If $u$ is a critical point  of $I_{\va}$, we may define $A_0,A_1,A_2$ through \eqref{2-5}, \eqref{2-6} and \eqref{2-7}, then $(u,A_0,A_1,A_2)$ is a solution of problem \eqref{eq0}. In the following, we focus on finding critical points
of the functional $I_{\va}$. Precisely, we are looking for critical point $u$ of $I_{\va}$ in the form
\[
u = U_*+\varphi
\]
with $\varphi\in E$. To this purpose, we expand the functional $I_\va$ near  approximate solutions.
\begin{proposition}\label{energy-prop} There holds,
\begin{eqnarray*}
I_{\va}(U_*)&= &(\frac12 -\frac1p)k\va^{2} \int_{\r^{2}} U^{p}dx -\frac12 \sik \big(V(x^0)-V(y^i)\big)\va^2\int_{\r^2}U^2 dx - C \va^2\sum_{i\neq j}^ke^{\frac{-|y^i-y^j|}{\va}} \\
& +& O\big(\va^{2+\theta} +\va^2\sum\limits_{i\neq j}^ke^{\frac{-(1+\sigma)|y^i-y^j|}{\va}} + \va^{4}  \big),
\end{eqnarray*}
where $C>0$ and $\sigma$ is a small fixed constant.
\end{proposition}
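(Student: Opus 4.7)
\smallskip
\noindent\textbf{Proof proposal.} The plan is to evaluate $I_\va(U_*)$ bump-by-bump, rescaling $x=y^i+\va z$ around each center and exploiting the equation satisfied by $U$ together with the H\"older regularity of $V$. Split
\[
I_\va(U_*) = \frac{\va^2}{2}\int_{\r^2}|\nabla U_*|^2 + \frac12\int_{\r^2} V U_*^2 + \frac12\int_{\r^2}\bigl(A_1(U_*)^2 + A_2(U_*)^2\bigr)U_*^2 - \frac1p\int_{\r^2} U_*^p.
\]

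The first, second, and fourth pieces fall within the classical Lyapunov--Schmidt expansion. Multiplying the rescaled equation $-\va^2\Delta U_{\va,y^i}+V(x^0)U_{\va,y^i}=U_{\va,y^i}^{p-1}$ by $U_{\va,y^j}$ and integrating yields
\[
\va^2\int|\nabla U_{\va,y^i}|^2 + V(x^0)\int U_{\va,y^i}^2 = \va^2\int U^p,
\]
\[
\va^2\int\nabla U_{\va,y^i}\cdot\nabla U_{\va,y^j} + V(x^0)\int U_{\va,y^i}U_{\va,y^j} = \int U_{\va,y^i}^{p-1}U_{\va,y^j}\quad(i\ne j),
\]
dispatching the kinetic term together with the $V(x^0)$-part of the potential. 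For the deviation $V-V(x^0)$, localization near $y^i$ combined with $(V_1)$ gives after rescaling
\[
\int(V(x)-V(x^0))U_{\va,y^i}^2\,dx = \va^2(V(y^i)-V(x^0))\int U^2 + O(\va^{2+\theta}),
\]
while the analogous cross integrals ($i\ne j$) are exponentially small because $U_{\va,y^i}U_{\va,y^j}$ already is. Expanding $(\sum_l U_{\va,y^l})^p$ around well-separated bumps gives
\[
\int U_*^p = k\va^2\int U^p + p\sum_{i\ne j}\int U_{\va,y^i}^{p-1}U_{\va,y^j} + O\Bigl(\va^2\sum_{i\ne j}e^{-(1+\sigma)|y^i-y^j|/\va}\Bigr).
\]
Assembling these identities, the kinetic, $V$, and $L^p$ contributions yield the first two terms of the claimed expansion together with the residual $-\sum_{i<j}\int U_{\va,y^i}^{p-1}U_{\va,y^j}$; by the sharp asymptotics \eqref{eq1a} this interaction integral is of the form $C\va^2 e^{-|y^i-y^j|/\va}$ up to the stated $(1+\sigma)$-correction, so the residual produces precisely $-C\va^2\sum_{i\ne j}e^{-|y^i-y^j|/\va}$.

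The Chern--Simons piece is the genuinely new ingredient, and its treatment rests on a hidden factor of $\va$ produced by rescaling the 2D Coulomb-type kernel $(x-y)/|x-y|^2$. Setting $y=y^l+\va w$ and $x=y^l+\va z$ gives
\[
A_j(U_{\va,y^l})(y^l+\va z) = \pm\frac{\va}{4\pi}\int_{\r^2}\frac{z_j-w_j}{|z-w|^2}U^2(w)\,dw =: \pm\frac{\va}{4\pi}\,a_j(z),
\]
so each $A_j(U_{\va,y^l})$ is pointwise $O(\va)$, with $|a_j(z)|\le C(1+|z|)^{-1}$. Cross contributions to $A_j(U_*)$ from products $U_{\va,y^l}U_{\va,y^m}$ with $l\ne m$ are exponentially small in $|y^l-y^m|/\va$ and negligible. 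Hence $A_j(U_*)^2=O(\va^2)$ pointwise and, since $\int U_*^2=O(\va^2)$,
\[
\frac12\int_{\r^2}\bigl(A_1(U_*)^2+A_2(U_*)^2\bigr)U_*^2\,dx = O(\va^4),
\]
which is exactly the $O(\va^4)$ term of the advertised remainder.

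The main technical obstacle is this last estimate. A priori the quartic non-local Chern--Simons functional could compete with the leading $O(\va^2)$-terms; only the rescaling above reveals the hidden $\va$ that confines its contribution to $O(\va^4)$. A secondary book-keeping issue is that by the sharp decay \eqref{eq1a} the interaction integral $\int U_{\va,y^i}^{p-1}U_{\va,y^j}$ carries a polynomial prefactor in $|y^i-y^j|/\va$ that must be absorbed into the $(1+\sigma)|y^i-y^j|/\va$ part of the error rather than the principal term; this follows by splitting the integration domain near and far from the rescaled difference $(y^i-y^j)/\va$ and using the exponential asymptotics of $U$ at infinity.
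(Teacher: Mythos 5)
Your proposal is correct and follows essentially the same route as the paper: the equation for $U$ handles the kinetic and $V(x^0)$-parts, the H\"older condition $(V_1)$ gives the $V(y^i)-V(x^0)$ contribution with an $O(\va^{2+\theta})$ error, the standard expansion of $U_*^p$ produces the exponential interaction term, and the rescaled Coulomb kernel bound $A_j(U_*)=O(\va)$ forces the Chern--Simons quartic term to be $O(\va^4)$. Your closing remark about the polynomial prefactor in the interaction integral is in fact a point the paper glosses over, so no gap relative to the paper's own argument.
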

\begin{proof}
Since the integral $\int_{\mathbb{R}^{2}}\frac{1}{|x- y|} U^{2}(y)\,dy$ is uniformly bounded in $x\in \mathbb{R}^{2}$, a change of variable yields
$$
\int_{\mathbb{R}^{2}}\frac{1}{|x- y|} U^{2}(\frac{y-y^{i}}{\varepsilon}) dy =\va \int_{\mathbb{R}^{2}} \frac{1}{|y- \frac{x - y^{i}}{\va}|} U^{2}(y) dy \leq C \va.
$$
This implies
\begin{equation}\label{energy-prop1-1}\begin{split}
&| \int_{\mathbb{R}^{2}} \bigl(- \frac{1}{4\pi} \int_{\mathbb{R}^{2}} \frac{x_{2} - y_{2}}{|x- y|^{2}} U_{*}^{2}(y) dy\bigr)^{2} U_{*}^{2}(x) dx|\\
\leq & C \int_{\mathbb{R}^{2}} \bigl( \int_{\mathbb{R}^{2}} \frac{1}{|x- y|} (\sum_{i=1}^{k}U_{\varepsilon, y^{i}}(y) )^{2}\,dy\bigr)^{2} (\sum_{i=1}^{k}U_{\varepsilon, y^{i}}(x))^{2}\, dx\\
\leq & C \int_{\mathbb{R}^{2}} \bigl(\sum_{i=1}^{k} \int_{\mathbb{R}^{2}}\frac{1}{|x- y|} U_{\varepsilon, y^{i}} ^{2}(y)\, dy\bigr)^{2} (\sum_{i=1}^{k}U_{\varepsilon, y^{i}}(x) )^{2}\, dx\\
 \leq & C \va^{2}\int_{\mathbb{R}^{2}}(\sum_{i=1}^{k}U_{\varepsilon, y^{i}}(x) )^{2}\, dx\\
  \leq & C \va^{4}.
\end{split}
\end{equation}
Analogously,
\begin{equation}\label{energy-prop1-1'}\begin{split}
 \frac12 \int_{\mathbb{R}^{2}} \bigl( \frac{1}{4\pi} \int_{\mathbb{R}^{2}} \frac{x_{1} - y_{1}}{|x- y|^{2}} U_{*}^{2}(y) dy\bigr)^{2} U_{*}^{2}(x) dx \leq C \va^{4}.
\end{split}
\end{equation}
Since
\[\va^{2}\int_{\r^{2}} \nabla U_{\va, y^{i}} \nabla U_{\va, y^{j}}\,dx + V(x^{0}) \int_{\r^{2}} U_{\va, y^{i}} U_{\va, y^{j}}\,dx = \int_{\r^{2}} U^{p-1}_{\va, y^{i}} U_{\va, y^{j}}dx\]
for any $i, j = 1, \cdots, k$, we  obtain
\begin{equation}
\begin{split}\label{energy-prop1-2}
&  \frac{1}{2}\int_{\mathbb{R}^{2}} \bigl(\varepsilon^{2} |\nabla U_{*}|^{2} + V(x) U_{*}^{2}\bigr) dx -\frac{1}{p} \int_{\mathbb{R}^{2}} |U_{*}|^{p} dx \\
= & \frac12 \sum_{i, j =1}^{k}  \int_{\r^{2}}\big[\Bigl(V(x)- V(x^{0})\Bigl) U_{\va, y^{i}} U_{\va, y^{j}} +  U^{p-1}_{\va, y^{i}} U_{\va, y^{j}}\big]\,dx - \frac {1}{p} \int_{\mathbb{R}^{2}} |U_{*}|^{p} dx.\\
\end{split}
\end{equation}
We write
\begin{eqnarray*}
&&\mathcal{H}:= \sum_{i, j =1}^{k}  \int_{\r^{2}}\Bigl(V(x)- V(x^{0})\Bigl) U_{\va, y^{i}} U_{\va, y^{j}}\nonumber\\
& =&\int_{\r^2}\bigl(V(x)-V(x^0)\bigr)\big(\sik U_{\va, y^i}^2+\sum\limits_{i\neq j}^kU_{\va, y^i}U_{\va, y^j}\big)dx\nonumber\\
&=&\int_{\r^2}\sik \bigl(V(x)-V(y^i)+V(y^i)-V(x^0)\bigr)U_{\va, y^i}^2dx\nonumber\\
&&+\int_{\r^2}\sum\limits_{i\neq j}^k\bigl(V(x)-V(y^i)+V(y^i)-V(x^0)\big)U_{\va,y^i}U_{\va,y^j}dx.\nonumber\\
\end{eqnarray*}
Changing variables we obtain
\begin{eqnarray*}
\mathcal{H}&=&-\va^2\int_{\r^2}U^2(y)\sik \big(V(x^0)-V(y^i)\big)+\va^2\int_{\r^2}\big(V(\va y+y^i)-V(y^i)\big)U^2(y)dy\nonumber\\
&&+\va^2\int_{\r^2}\sum_{i\neq j}^k\big(V(\va y+y^i)-V(y^i)\big)U(y) U(y-\frac{y^i-y^j}{\va})dy\nonumber\\
&&+\va^2\sum_{i\neq j}^k\int_{\r^2}\big(V(x^0)-V(y^i)\big)U(y) U(y-\frac{y^i-y^j}{\va})dy.\nonumber\\
\end{eqnarray*}
By assumption $V_1$ and \eqref{eq1a},
\begin{eqnarray}\label{energy-prop1-2a}
\mathcal{H}&\leq&-\va^2\int_{\r^2}U^2\sik \big(V(x^0)-V(y^i)\big)+\va^2k\int_{\r^2}|\va y|^{\theta}U^2(y)\,dy\nonumber\\
&&+C\va^2\sum_{i\neq j}^k\int_{\r^2}|\va y|^\theta U(y) U(y-\frac{y^i-y^j}{\va})dy++C\va^2\sum_{i\neq j}^k\int_{\r^2}|\va|^\theta U(y) U(y-\frac{y^i-y^j}{\va})dy\nonumber\\
&=&-\va^2\int_{\r^2}U^2\sik \big(V(x^0)-V(y^i)\big)+O(\va^{2+\theta}+\va^{2+\theta}\sum_{i\neq j}^ke^{-|\frac{y^i-y^j}{\va}|}).
\end{eqnarray}
Note that if $p>3,$
\begin{eqnarray*}
&&\int_{\r^2}\sum_{i\neq j}^kU_{\va,y^i}^{p-2}U_{\va,y^j}^2dx\\
&=&C\int_{\r^2}\va^2 \sum_{i\neq j}^kU^{p-2}(y-\frac{y^i}{\va})U^2(y-\frac{y^j}{\va})dy\\
&\leq&C\va^2 \sum_{i\neq j}^ke^{\frac{-\min\{(p-2), 2 \} |y^i-y^j|}{\va}}\leq C\va^2\sum\limits_{i\neq j}^ke^{\frac{-(1+\sigma)|y^i-y^j|}{\va}},
\end{eqnarray*}
and if $2<p\leq 3,$
\begin{eqnarray*}
&&\int_{\r^2}\sum_{i\neq j}^kU_{\va,y^i}^{\frac{p}{2}}U_{\va,y^j}^{\frac{p}{2}}dx\\
&\leq& C\va^2\sum_{i\neq j}^k\int_{\r^2}U^{\frac{p}{2}}U(y-\frac{y^i-y^j}{\va})^{\frac{p}{2}}dy\\
&\leq& C\va^2\sum_{i\neq j}^ke^{\frac{-p|y^i-y^j|}{2\va}}\leq C\va^2\sum\limits_{i\neq j}^ke^{\frac{-(1+\sigma)|y^i-y^j|}{\va}},
\end{eqnarray*}
then we have
\begin{equation}\label{energy-prop1-3}
\begin{array}{ll}
\ds\int_{\r^2}U_{*}^{p}&=\ds\int_{\r^2}\sum_{i=1}^k U_{\va,y^i}^{p}dx+p\ds\int_{\r^2}\sum_{i\neq j}^kU_{\va,y^i}^{p-1}U_{\va,y^j}dx\\
&\quad+\left\{
\begin{array}{ll}
O\Big(\ds\int_{\r^2}\sum_{i\neq j}^{k}U_{\va,y^i}^{p-2}U_{\va,y^j}^2dx\Big),\quad (p>3)\vspace{0.2cm} \\
O\Big(\ds\int_{\r^2}\sum_{i\neq
j}^{k}U_{\va,y^i}^{\frac {p}2}U_{\va,y^j}^{\frac {p}2}\Big),\quad (2<p\leq3)
\end{array}
\right.\vspace{0.2cm}\\
&=k\va^2\ds\int_{\r^2}U^{p}+C p \va^2\sum_{i\neq j}^ke^{\frac{-|y^i-y^j|}{\va}} +O(\va^2\sum\limits_{i\neq j}^ke^{\frac{-(1+\sigma)|y^i-y^j|}{\va}}),
\end{array}
\end{equation}
 $\sigma$ is a small fixed constant.

Inserting \eqref{energy-prop1-2a} and \eqref{energy-prop1-3} into \eqref{energy-prop1-2}, we are led to the result.
\end{proof}

\bigskip

\section{Energy expansion}

\bigskip

In this section, we expand the functional
$$
\mathcal{J}_{\va}({\bf {y}},\varphi)=I_{\va}\big(U_{*} +\varphi\big),\ ({\bf {y}},\varphi)\in
M_{k}^{\va}
$$
and present some basic estimates. The functional $\mathcal{J}_{\va}({\bf {y}}, \varphi)$ is expanded as follows.
\begin{equation}\label{expand}
\mathcal{J}_{\va}({\bf {y}},\varphi)=J_{\va}( {\bf {y}}, 0)+\ell_{\va}(\varphi)+\frac{1}{2}L_{\va}(\varphi)
+R_{\va}(\varphi),
\end{equation}
where
\begin{eqnarray*}
\ell_\va(\varphi)&=&\ds\int_{\r^2}\bigl(V(x)-V(x^0)\bigr)U_{*}\varphi dx+\int_{\r^2}\Bigl(\sik U_{\va ,y^i}^{p-1}\varphi-\big(\sik
U_{\va, y^i}\big)^{p-1}\varphi\Bigr)dx\\
&& + \int_{\r^{2}} \bigl( -\frac{1}{4\pi} \int_{\r^{2}} \frac{x_{2} - y_{2}}{|x- y|^{2}} U^{2}_{*}(y) dy \bigr)^{2} U_{*}(x) \varphi(x) dx \\
&&  + \frac{1}{8\pi^{2}}\int_{\r^{2}}  \bigl( \int_{\r^{2}}  \frac{x_{2} - y_{2}}{|x- y|^{2}} U^{2}_{*}(y) dy \cdot \int_{\r^{2}}  \frac{x_{2} - z_{2}}{|x- z|^{2}} U_{*}(z) \varphi(z) dz \bigr)U_{*}^{2} dx\\
&& + \int_{\r^{2}} \bigl( \frac{1}{4\pi} \int_{\r^{2}} \frac{x_{1} - y_{1}}{|x- y|^{2}} U^{2}_{*}(y) dy \bigr)^{2} U_{*}(x) \varphi(x) dx \\
&&  + \frac{1}{8\pi^{2}}\int_{\r^{2}}  \bigl( \int_{\r^{2}}  \frac{x_{1} - y_{1}}{|x- y|^{2}} U^{2}_{*}(y) dy \cdot \int_{\r^{2}}  \frac{x_{1} - z_{1}}{|x- z|^{2}} U_{*}(z) \varphi(z) dz \bigr)U_{*}^{2} dx,
\end{eqnarray*}

\begin{eqnarray*}
L_{\va}(\varphi) &=& \va^{2} \int_{\r^{2}} |\nabla \varphi|^{2} dx + \ds\int_{\r^2}V(x) \varphi^{2}dx - (p-1) \int_{\r^2}U_{*}^{p-2}\varphi ^{2 }dx\\
&& + \int_{\r^{2}} \bigl( \frac{1}{4\pi} \int_{\r^{2}} \frac{x_{2} - y_{2}}{|x- y|^{2}} U^{2}_{*}(y) dy \bigr)^{2} \varphi^{2}(x) dx \\
&&  + \frac{1}{2\pi^{2}}\int_{\r^{2}}  \bigl( \int_{\r^{2}}  \frac{x_{2} - y_{2}}{|x- y|^{2}} U^{2}_{*}(y) dy \cdot \int_{\r^{2}}  \frac{x_{2} - z_{2}}{|x- z|^{2}} U_{*}(z) \varphi(z) dz \bigr)U_{*}(x) \varphi(x) dx\\
&& + \frac{1}{8\pi^{2}}\int_{\r^{2}} \bigl( \int_{\r^{2}}  \frac{x_{2} - y_{2}}{|x- y|^{2}} U^{2}_{*}(y) dy \cdot \int_{\r^{2}}  \frac{x_{2} - z_{2}}{|x- z|^{2}}  \varphi^{2}(z) dz \bigr)U^{2}_{*}(x)dx\\
&& + \int_{\r^{2}} \bigl( \frac{1}{4\pi} \int_{\r^{2}} \frac{x_{1} - y_{1}}{|x- y|^{2}} U^{2}_{*}(y) dy \bigr)^{2} \varphi^{2}(x) dx \\
&&  + \frac{1}{2\pi^{2}}\int_{\r^{2}}  \bigl( \int_{\r^{2}}  \frac{x_{1} - y_{1}}{|x- y|^{2}} U^{2}_{*}(y) dy \cdot \int_{\r^{2}}  \frac{x_{1} - z_{1}}{|x- z|^{2}} U_{*}(z) \varphi(z) dz \bigr)U_{*}(x) \varphi(x) dx\\
&& + \frac{1}{8\pi^{2}} \int_{\r^{2}} \bigl( \int_{\r^{2}}  \frac{x_{1} - y_{1}}{|x- y|^{2}} U^{2}_{*}(y) dy \cdot \int_{\r^{2}}  \frac{x_{1} - z_{1}}{|x- z|^{2}}  \varphi^{2}(z) dz \bigr)U^{2}_{*}(x)dx\\
&&:=L_{1,\va}(\varphi)+L_{2,\va}(\varphi),
\end{eqnarray*}
where $L_{1,\va}(\varphi)=\va^{2} \int_{\r^{2}} |\nabla \varphi|^{2} dx + \ds\int_{\r^2}V(x) \varphi^{2}dx - (p-1) \int_{\r^2}U_{*}^{p-2}\varphi ^{2 }dx$, $L_{2,\va}(\varphi)$ is the rest,
and

\begin{eqnarray*}
R_{\va}(\varphi)&=&-\frac{1}{p}\ds\int_{\r^2}\Big((U_{*}+\varphi)^{p}-
U_{*}^{p}- pU_{*}^{p-1}\varphi-\frac{1}{2}(p-1)pU_{*}^{p-2}\varphi^2\Big)dx\\
&& + \frac{1}{8\pi^{2}}   \int_{\r^{2}}  \bigl( \int_{\r^{2}}  \frac{x_{2}- y_{2}}{|x - y|^{2}} \varphi^{2}(y)  dy \cdot  \int_{\r^{2}}  \frac{x_{2}- z_{2}}{|x - z|^{2}} U_{*}(z) \varphi(z)  dy \bigr) U^{2}_{*}(x)   dx \\
&& + \frac{1}{4\pi^{2}}  \int_{\r^{2}}  \bigl( \int_{\r^{2}}  \frac{x_{2}- y_{2}}{|x - y|^{2}} U_{*} (y)\varphi(y)  dy\bigr)^{2} U_{*}(x)  \varphi(x) dx  \\
&&  + \frac{1}{8\pi^{2}}  \int_{\r^{2}}  \bigl( \int_{\r^{2}}  \frac{x_{2}- y_{2}}{|x - y|^{2}} U^{2}_{*} (y)  dy \cdot \int_{\r^{2}}  \frac{x_{2}- z_{2}}{|x - z|^{2}} \varphi^{2} (z)  dz  \bigr)U_{*}(x)  \varphi(x) dx   \\
&& + \frac{1}{8\pi^{2}}  \int_{\r^{2}}  \bigl( \int_{\r^{2}}  \frac{x_{2}- y_{2}}{|x - y|^{2}} U^{2}_{*} (y)  dy \cdot \int_{\r^{2}}  \frac{x_{2}- z_{2}}{|x - z|^{2}} U_{*}(z) \varphi(z)  dz  \bigr)  \varphi^{2}(x) dx   \\
&& + \frac{1}{32\pi^{2}}  \int_{\r^{2}}  \bigl( \int_{\r^{2}}  \frac{x_{2}- y_{2}}{|x - y|^{2}} \varphi^{2}(y)  dy   \bigr)^{2} U_{*}^{2}(x) dx   \\
&& + \frac{1}{4\pi^{2}}  \int_{\r^{2}}  \bigl( \int_{\r^{2}}  \frac{x_{2}- y_{2}}{|x - y|^{2}} U_{*} (y)\varphi(y)  dy \cdot \int_{\r^{2}}  \frac{x_{2}- z_{2}}{|x - z|^{2}} \varphi^{2}(z)  dz  \bigr)   U_{*}(x) \varphi(x) dx   \\
&& + \frac{1}{8\pi^{2}}  \int_{\r^{2}}  \bigl( \int_{\r^{2}}  \frac{x_{2}- y_{2}}{|x - y|^{2}} U_{*} (y)\varphi(y)  dy   \bigr)^{2}    \varphi^{2}(x) dx   \\
&& + \frac{1}{16 \pi^{2}}  \int_{\r^{2}}  \bigl( \int_{\r^{2}}  \frac{x_{2}- y_{2}}{|x - y|^{2}} U^{2}_{*} (y)  dy \cdot \int_{\r^{2}}  \frac{x_{2}- z_{2}}{|x - z|^{2}} \varphi^{2}(z)  dz  \bigr)  \varphi^{2}(x) dx   \\
&& + \frac{1}{16 \pi^{2}}  \int_{\r^{2}}  \bigl(  \int_{\r^{2}}  \frac{x_{2}- y_{2}}{|x - y|^{2}} \varphi^{2}(y)  dy  \bigr)^{2} U_{*}(x) \varphi(x) dx \\
&& + \frac{1}{8 \pi^{2}}  \int_{\r^{2}}  \bigl( \int_{\r^{2}}  \frac{x_{2}- y_{2}}{|x - y|^{2}} U_{*} (y) \varphi(y)  dy \cdot \int_{\r^{2}}  \frac{x_{2}- z_{2}}{|x - z|^{2}} \varphi^{2}(z)  dz  \bigr)  \varphi^{2}(x) dx   \\
&& + \frac{1}{32 \pi^{2}}  \int_{\r^{2}}  \bigl(  \int_{\r^{2}}  \frac{x_{2}- y_{2}}{|x - y|^{2}} \varphi^{2}(y)  dy  \bigr)^{2} \varphi^{2}(x) dx \\
&& + \frac{1}{8\pi^{2}}   \int_{\r^{2}}  \bigl( \int_{\r^{2}}  \frac{x_{1}- y_{1}}{|x - y|^{2}} \varphi^{2}(y)  dy \cdot  \int_{\r^{2}}  \frac{x_{1}- z_{1}}{|x - z|^{2}} U_{*}(z) \varphi(z)  dy \bigr) U^{2}_{*}(x)   dx \\
&& + \frac{1}{4\pi^{2}}  \int_{\r^{2}}  \bigl( \int_{\r^{2}}  \frac{x_{1}- y_{1}}{|x - y|^{2}} U_{*} (y)\varphi(y)  dy\bigr)^{2} U_{*}(x)  \varphi(x) dx  \\
&&  + \frac{1}{8\pi^{2}}  \int_{\r^{2}}  \bigl( \int_{\r^{2}}  \frac{x_{1}- y_{1}}{|x - y|^{2}} U^{2}_{*} (y)  dy \cdot \int_{\r^{2}}  \frac{x_{1}- z_{1}}{|x - z|^{2}} \varphi^{2} (z)  dz  \bigr)U_{*}(x)  \varphi(x) dx   \\
&& + \frac{1}{8\pi^{2}}  \int_{\r^{2}}  \bigl( \int_{\r^{2}}  \frac{x_{1}- y_{1}}{|x - y|^{2}} U^{2}_{*} (y)  dy \cdot \int_{\r^{2}}  \frac{x_{1}- z_{1}}{|x - z|^{2}} U_{*}(z) \varphi(z)  dz  \bigr)  \varphi^{2}(x) dx   \\
&& + \frac{1}{32\pi^{2}}  \int_{\r^{2}}  \bigl( \int_{\r^{2}}  \frac{x_{1}- y_{1}}{|x - y|^{2}} \varphi^{2}(y)  dy   \bigr)^{2} U_{*}^{2}(x) dx   \\
&& + \frac{1}{4\pi^{2}}  \int_{\r^{2}}  \bigl( \int_{\r^{2}}  \frac{x_{1}- y_{1}}{|x - y|^{2}} U_{*} (y)\varphi(y)  dy \cdot \int_{\r^{2}}  \frac{x_{1}- z_{1}}{|x - z|^{2}} \varphi^{2}(z)  dz  \bigr)   U_{*}(x) \varphi(x) dx   \\
&& + \frac{1}{8\pi^{2}}  \int_{\r^{2}}  \bigl( \int_{\r^{2}}  \frac{x_{1}- y_{1}}{|x - y|^{2}} U_{*} (y)\varphi(y)  dy   \bigr)^{2}    \varphi^{2}(x) dx   \\
&& + \frac{1}{16 \pi^{2}}  \int_{\r^{2}}  \bigl( \int_{\r^{2}}  \frac{x_{1}- y_{1}}{|x - y|^{2}} U^{2}_{*} (y)  dy \cdot \int_{\r^{2}}  \frac{x_{1}- z_{1}}{|x - z|^{2}} \varphi^{2}(z)  dz  \bigr)  \varphi^{2}(x) dx   \\
&& + \frac{1}{16 \pi^{2}}  \int_{\r^{2}}  \bigl(  \int_{\r^{2}}  \frac{x_{1}- y_{1}}{|x - y|^{2}} \varphi^{2}(y)  dy  \bigr)^{2} U_{*}(x) \varphi(x) dx \\
&& + \frac{1}{8 \pi^{2}}  \int_{\r^{2}}  \bigl( \int_{\r^{2}}  \frac{x_{1}- y_{1}}{|x - y|^{2}} U_{*} (y) \varphi(y)  dy \cdot \int_{\r^{2}}  \frac{x_{1}- z_{1}}{|x - z|^{2}} \varphi^{2}(z)  dz  \bigr)  \varphi^{2}(x) dx   \\
&& + \frac{1}{32 \pi^{2}}  \int_{\r^{2}}  \bigl(  \int_{\r^{2}}  \frac{x_{1}- y_{1}}{|x - y|^{2}} \varphi^{2}(y)  dy  \bigr)^{2} \varphi^{2}(x) dx\\
&:=&R_{1,\va}(\varphi)+R_{2,\va}(\varphi),
\end{eqnarray*}
where $R_{1,\va}(\varphi)=-\frac{1}{p}\ds\int_{\r^2}\Big((U_{*}+\varphi)^{p}-
U_{*}^{p}- pU_{*}^{p-1}\varphi-\frac{1}{2}(p-1)pU_{*}^{p-2}\varphi^2\Big)dx$ and $R_{2,\va}(\varphi)=R_{\va}(\varphi)-R_{1,\va}(\varphi)$.

In order to find a critical point $({\bf {y}},\varphi)\in M_k^{\va}$ for $J_{\va}({\bf {y}},\varphi),$ we need
to estimate each term in expansion \eqref{expand}.

\begin{lemma}\label{lm3.1}
There exists a constant $C> 0$, such that
\begin{equation*}
\begin{split}
\|R^{(i)}_{\va}(\varphi)\| &\leq C \va^{- \min\{1, p-2\}} \|\varphi\|_{\va}^{\min\{3-i, p-i\}}\\
 &+  C\Bigl( \va \|\varphi\|_{\va}^{3- i } + \|\varphi\|^{4-i }_{\va} + \va^{-1} \|\varphi\|^{5-i }_{\va}  + \va^{-2} \|\varphi\|^{6-i }_{\va} \Bigr).\\
 \end{split}
\end{equation*}
\end{lemma}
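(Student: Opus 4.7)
I would decompose $R_\va = R_{1,\va} + R_{2,\va}$ exactly as in the statement and estimate the two pieces separately, repeating each estimate (with the obvious modifications) for the Fr\'echet derivatives $R_\va^{(i)}$, $i=0,1,2$.

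For the polynomial remainder $R_{1,\va}$, the plan is the standard Taylor pointwise bound
\[
\Bigl| (U_* + \varphi)^p - U_*^p - p\,U_*^{p-1}\varphi - \tfrac12 p(p-1) U_*^{p-2}\varphi^2 \Bigr| \le C\bigl( U_*^{p-3}|\varphi|^3 + |\varphi|^p\bigr)
\]
for $p\ge 3$ and the simpler $O(|\varphi|^p)$ bound when $2<p\le 3$. Integrating and invoking the rescaling identity
\[
\|\varphi\|_{L^q(\r^2)} \le C\,\va^{2/q-1}\|\varphi\|_\va,
\]
which comes from changing variables $y=x/\va$ and applying the 2D Sobolev embedding to the rescaled profile, converts the $L^3$ and $L^p$ norms of $\varphi$ into $\|\varphi\|_\va$ and produces exactly the leading term $C\va^{-\min\{1,p-2\}}\|\varphi\|_\va^{\min\{3,p\}}$. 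Differentiating in $\varphi$ once or twice and repeating the Taylor--H\"older step gives the $i$-dependent reduction $3-i$ or $p-i$ in the exponent of $\|\varphi\|_\va$.

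For the Chern--Simons remainder $R_{2,\va}$, every term is of the form
\[
\int_{\r^2} \Bigl( \int_{\r^2} \frac{x_j - y_j}{|x-y|^2} f(y)\,dy \Bigr) \Bigl( \int_{\r^2} \frac{x_j - z_j}{|x-z|^2} g(z)\,dz \Bigr) h(x)\,dx,
\]
where $f,g,h$ are products of $U_*$ and $\varphi$ whose total number $m$ of $\varphi$-factors ranges over $\{3,4,5,6\}$ (degrees $0,1,2$ in $\varphi$ having already been absorbed into $J_\va({\bf y},0)$, $\ell_\va(\varphi)$ and $\tfrac12 L_\va(\varphi)$). I would bound each such integral by applying the Hardy--Littlewood--Sobolev inequality $\|I_1\psi\|_{L^q(\r^2)}\le C\|\psi\|_{L^p(\r^2)}$ with $\tfrac1p-\tfrac1q=\tfrac12$, $1<p<2$, to both nonlocal factors and closing with H\"older on the outer integration, so that everything reduces to $L^q$ norms of $U_*$ and $\varphi$. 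Combining
\[
\|U_*\|_{L^q}\le Ck^{1/q}\va^{2/q},\qquad \|\varphi\|_{L^q}\le C\va^{2/q-1}\|\varphi\|_\va,
\]
one sees that replacing a $U_*$-factor by a $\varphi$-factor costs exactly one power of $\va^{-1}$; starting from the sextic all-$U_*$ scale $\va^4$ (cf.\ \eqref{energy-prop1-1}--\eqref{energy-prop1-1'}) this produces the four residual contributions $\va\|\varphi\|_\va^3$, $\|\varphi\|_\va^4$, $\va^{-1}\|\varphi\|_\va^5$, $\va^{-2}\|\varphi\|_\va^6$ corresponding to $m=3,4,5,6$. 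For the derivatives $R_{2,\va}^{(i)}$, precisely $i$ of the $\varphi$'s are replaced by unit-norm test functions, giving the uniform reduction $m\mapsto m-i$ in the exponent.

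The main obstacle I anticipate is administrative rather than conceptual: verifying that a single choice of H\"older and HLS exponents is admissible for each of the many terms collected in $R_{2,\va}$, particularly for the sextic ones where two consecutive Riesz-type convolutions have to be processed on $\r^2$ (where the admissible HLS window for $I_1$ is narrow). Once an admissible exponent pattern is fixed for the worst case $m=6$, the lower-order terms $m=3,4,5$ follow with the same H\"older configuration, because the extra $U_*$ factors are uniformly bounded and contribute only additional powers of $\va$; careful bookkeeping of these $\va$-powers then produces exactly the four residual terms in the claimed estimate.
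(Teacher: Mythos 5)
Your treatment of $R_{1,\va}$ (Taylor pointwise bound plus the rescaled Sobolev inequality $\|\varphi\|_{L^q}\le C\va^{2/q-1}\|\varphi\|_\va$) is exactly the paper's argument. For the Chern--Simons remainder $R_{2,\va}$ you take a genuinely different, though equivalent, route. The paper does not invoke Hardy--Littlewood--Sobolev at all: it proves two \emph{uniform-in-$x$} bounds on the nonlocal factors, namely $\bigl|\int \frac{x_j-y_j}{|x-y|^2}U_*(y)\varphi(y)\,dy\bigr|\le C\|\varphi\|_\va$ (via H\"older with exponents $3/2,3$ and the scaling $\int |x-y|^{-3/2}U_*^{3/2}\,dy\le C\va^{1/2}$) and $\bigl|\int\frac{x_j-z_j}{|x-z|^2}\varphi^2(z)\,dz\bigr|\le C\va^{-1}\|\varphi\|_\va^2$ (via a near/far splitting of the singularity), together with the already-established $\bigl|\int\frac{x_j-y_j}{|x-y|^2}U_*^2(y)\,dy\bigr|\le C\va$; each term of $R_{2,\va}$ is then bounded by pulling these sup-bounds out and integrating what remains. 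Your scheme instead keeps both convolutions in $L^q$ via HLS and closes with H\"older on the outer variable; a spot check (e.g.\ $p=3/2$, $q=6$, $r=3/2$ for the sextic term $\int(I_1\varphi^2)^2\varphi^2$ gives $C\|\varphi\|_{L^3}^6\le C\va^{-2}\|\varphi\|_\va^6$) confirms that admissible exponents exist for every $m\in\{3,\dots,6\}$, so your bookkeeping heuristic ``one $U_*\mapsto\varphi$ swap costs $\va^{-1}\|\varphi\|_\va$, starting from the all-$U_*$ scale $\va^4$'' reproduces the four residual terms exactly. The paper's pointwise method buys you freedom from exponent bookkeeping (the worry you flag about the narrow $I_1$ window on $\r^2$ simply does not arise), while your HLS method is more systematic and would generalize more readily; both are correct. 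One small caveat common to both your write-up and the paper: for $p>3$ the Taylor remainder also carries an $O(\va^{2-p}\|\varphi\|_\va^p)$ contribution from the $|\varphi|^p$ tail, which is not literally among the terms in the stated estimate and is harmless only because the lemma is applied to $\varphi$ with $\|\varphi\|_\va\lesssim\va^{1+\theta-\kappa}$.
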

\begin{proof} First we estimate $R_{1,\va}$. Let $\tilde{\varphi}=\varphi(\va x).$ We have
\begin{eqnarray}\label{1m3.1-0}
\int_{\r^2}|\varphi|^pdx&=&
\va^2\int_{\r^2}|\widetilde{\varphi}|^p\leq
C\va^2\bigl(\int_{\r^2}(|\nabla\widetilde{\varphi}|^2+|\widetilde{\varphi}|^2)dx\bigr)^{\frac p2}\nonumber\\
&=&C\va^2\bigl(\va^{-2}\int_{\r^2}(\va^2|\nabla
\varphi|^2+|\varphi|^2)dx\bigr)^{\frac p2}\nonumber\\
&\leq& C\va^{2-p}\|\varphi\|^p_{\va}.
\end{eqnarray}

If $2<p\leq3$, we find
\begin{equation*}\label{1m3.1-1}
\Big|R_{1,\va}(\varphi)\Big|\leq C\int_{\r^2}|\varphi|^{p}dx\leq C\va^{2-p}\|\varphi\|_{\va}^{p},
\end{equation*}
\begin{eqnarray*}\label{1m3.1-2}
\Big|\langle R'_{1,\va}(\varphi),\psi\rangle\Big|&\leq&
C\int_{\r^2}|\varphi|^{p-1}\psi dx \nonumber\\
&\leq&C\Big(\int_{\r^2}|\varphi|^{p}dx\Bigr)^{\frac{p-1}{p}}\Bigl(\int_{\r^2}|\psi|^{p}dx\Bigr)^{\frac{1}{p}}\nonumber\\
&\leq&C\Bigl(\va^{2-p}\|\varphi\|_{\va}^{p}\Big)^{\frac{p-1}{p}}
\Big(\va^{2-p}\|\psi\|_{\va}^{p}\Big)^{\frac{1}{p}}\nonumber\\
&\leq&C\va^{2-p}\|\varphi\|_{\va}^{p-1}\|\psi\|_{\va}
\end{eqnarray*}
and
\begin{eqnarray*}\label{1m3.1-3}
\Big|\langle R''_{1,\va}(\varphi)(\psi,\xi)\rangle\Big|&\leq&
C\int_{\r^2}|\varphi|^{p-2}|\psi||\xi| dx\nonumber\\
&\leq&C\Big(\int_{\r^2}|\varphi|^{(p-2)\frac{p}{p-2}}dx\Big)^{\frac{p-2}{p}}\Big(\int_{\r^2}|\psi|^{p}dx\Big)^{\frac{1}{p}}
\Big(\int_{\r^2}|\xi|^{p}dx\Big)^{\frac{1}{p}}\nonumber\\
&\leq& C\Big(\va^{2-p}\|\varphi\|_{\va}^{p}\Big)^{\frac{p-2}{p}}\Big(\va^{2-p}\|\psi\|_{\va}^{p}\Big)
^{\frac{1}{p}}\Big(\va^{2-p}\|\xi\|_{\va}^{p}\Big)^{\frac{1}{p}}\nonumber\\
&\leq&C\va^{2-p}\|\varphi\|_{\va}^{p-1}\|\psi\|_{\va}\|\xi\|_{\va}.
\end{eqnarray*}

If $p>3$, we estimate
$$
\Big|R_{1,\va}(\varphi)\Big|\leq C\int_{\r^2}U_{\va,y}^{p-1}|\varphi|^3\leq C\va^{-1}\|\varphi\|_{\va}^3,
$$
$$
\Big|\langle R'_{1,\va}(\varphi),\psi\rangle\Big|\leq C\va^{-1}\|\varphi\|_{\va}^2\|\psi\|_{\va}
$$
and
$$
\Big|\langle R''_{1,\va}(\varphi)(\psi,\xi)\rangle\big|\leq C\va^{-1}\|\varphi\|_{\va}\|\psi\|_{\va}\|\xi\|_{\va}.
$$

Next, we  estimate $R_{2,\va}$. We commence with some fundamental estimates needed. By  H\"{o}lder inequality and \eqref{1m3.1-0}, we have
\begin{equation}\label{lm3.1-4}\begin{split}
| \int_{\r^{2}} \frac{x_{2}- y_{2}}{|x - y|^{2}} U_{*} (y)\varphi(y)  dy|&\leq |  \int_{\r^{2}}  \frac{1}{|x - y|} U_{*} (y)\varphi(y)  dy|\\
\leq &\bigl(\int_{\r^{2}} \frac{1 }{|x - y|^{\frac32}}U_{*}^{\frac32} (y) dy \bigr)^{\frac23} \bigl(\int_{\r^{2}} |\varphi|^{3}\bigr)^{\frac13}\\
\leq & C\va^{-\frac13}\|\varphi\|_{\va}\big(\int_{\r^{2}}\va^2\sum\frac{U^{\frac32}(z)}{|x-(\va z+y^i)|^{\frac32}}dz\big)^{\frac23}\\
\leq & C\va^{-\frac13}\|\varphi\|_{\va}\big(\int_{\r^{2}}\frac{\va^2}{\va^{\frac32}}\sum\frac{U^{\frac32}(z)}{|z-\frac{x-y^i}{\va}|^{\frac32}}dz\big)^{\frac23}\\
\leq & C \|\varphi\|_{\va}
   \end{split}
 \end{equation}
 and
\begin{equation}\label{lm3.1-5}
\begin{split}
\big|\int_{\r^{2}} \frac{x_{1} -z_{1}}{|x -z|^{2}} \varphi^{2} dz \big|\leq  & \int_{B_{\va}(x)}  \frac{1}{|x -z|} \varphi^{2} dz  + \int_{\r^{2} \setminus B_{\va}(x)}  \frac{1}{|x -z|} \varphi^{2} dz \\
\leq  & C \bigl( \int_{B_{\va}(x)} \frac{1}{|x -z|^{\frac32}}\,dz\bigr)^{\frac23}  \|\varphi\|_{L^{6}(\r^{2})}^{2} +  \va^{-1} \|\varphi\|^{2}_{L^{2}(\r^{2})} \\
\leq & C \bigl(\va^{\frac13} \va^{-\frac43} \|\varphi\|_{\va}^{2} + \va^{-1} \|\varphi\|_{L^{2}(\r^{2})}^{2}\bigr)\\
 \leq &  C \va^{-1} \|\varphi\|_{\va}^{2}.
\end{split}
\end{equation}
Equations \eqref{lm3.1-4}, \eqref{lm3.1-5} and H\"{o}lder inequality yield
 \begin{equation*}\begin{split}
& | \int_{\r^{2}}  \bigl( \int_{\r^{2}}  \frac{x_{2}- y_{2}}{|x - y|^{2}} \varphi^{2}(y)  dy \cdot  \int_{\r^{2}}  \frac{x_{2}- z_{2}}{|x - z|^{2}} U_{*}(z) \varphi(z)  dy \bigr) U^{2}_{*}(x)   dx | \\
\leq & C\va^{-1} \|\varphi\|^2_{\va} \|\varphi\|_{\va}\int_{\r^{2}} U^{2}_{*}(x)dx \leq C \va \|\varphi\|_{\va}^{3}
\end{split}
 \end{equation*}

and
\begin{equation*}\begin{split}
  &\big| \int_{\r^{2}}  \bigl( \int_{\r^{2}}  \frac{x_{2}- y_{2}}{|x - y|^{2}} U_{*} (y)\varphi(y)  dy\bigr)^{2} U_{*}(x)  \varphi(x) dx\big| \\
  \leq & C \|\varphi\|^{2}_{\va} \bigl( \int_{\r^{2}} U_{*}^{2} dx \bigr)^{\frac 12 } \bigl( \int_{\r^{2}} \varphi^{2} dx \bigr)^{\frac 12 } \leq  C  \va \|\varphi\|_{\va}^{3}.
\end{split}
\end{equation*}
Similarly, we have the following estimates:
\begin{eqnarray*}
&& \big| \int_{\r^{2}}  \bigl( \int_{\r^{2}}  \frac{x_{2}- y_{2}}{|x - y|^{2}} U^{2}_{*} (y)  dy \cdot \int_{\r^{2}}  \frac{x_{2}- z_{2}}{|x - z|^{2}} \varphi^{2} (z)  dz  \bigr)U_{*}(x)  \varphi(x)\, dx\big|\\
&\leq &
C\va^{-1} \|\varphi\|_{\va}^{2}  \int_{\r^{2}} \int_{\r^{2}} \frac{1}{|x - y|} U^{2}(y^j)  dy U_{y^{i}}(x)  \varphi(x) dx
\leq  C \va \|\varphi\|_{\va}^{3},
\end{eqnarray*}
\begin{eqnarray*}
 && \bigg|\int_{\r^{2}}  \bigl( \int_{\r^{2}}  \frac{x_{2}- y_{2}}{|x - y|^{2}} U^{2}_{*} (y)  dy \cdot \int_{\r^{2}}  \frac{x_{2}- z_{2}}{|x - z|^{2}} U_{*}(z) \varphi(z)  dz  \bigr)  \varphi^{2}(x) dx\bigg| \\
&\leq  & C \|\varphi\|_{\va} \int_{\r^{2}}  \bigl( \int_{\r^{2}}  \frac{1}{|x - y|} U^{2}(\frac{y-y^{i}}{\va})  dy   \bigr)  \varphi^{2}(x) dx \leq  C	 \va  \|\varphi\|_{\va} \int_{\r^{2}}\varphi^{2}(x) dx\\
&\leq & C	 \va \|\varphi\|_{\va}^{3},
\end{eqnarray*}

\begin{equation*}\begin{split}
&| \int_{\r^{2}}  \bigl( \int_{\r^{2}}  \frac{x_{2}- y_{2}}{|x - y|^{2}} \varphi^{2}(y)  dy   \bigr)^{2} U_{*}^{2}(x) dx |\leq  C\va^{-2}\|\varphi\|^{4}_{\va}\int_{\r^{2}} U_{*}^{2}(x) dx\leq C\|\varphi\|^{4}_{\va},
\end{split}
\end{equation*}

\begin{equation*}\begin{split}
&  |\int_{\r^{2}}  \bigl( \int_{\r^{2}}  \frac{x_{2}- y_{2}}{|x - y|^{2}} U_{*} (y)\varphi(y)  dy \cdot \int_{\r^{2}}  \frac{x_{2}- z_{2}}{|x - z|^{2}} \varphi^{2}(z)  dz  \bigr)   U_{*}(x) \varphi(x) dx | \\
\leq & C\va^{-1} \|\varphi\|^3_{\va}\int_{\r^{2}} |U_{*}(x) \varphi(x)| dx\leq   C \|\varphi\|_{\va}^{4},
\end{split}
\end{equation*}

$$ \int_{\r^{2}}  \bigl( \int_{\r^{2}}  \frac{x_{2}- y_{2}}{|x - y|^{2}} U_{*} (y)\varphi(y)  dy   \bigr)^{2}    \varphi^{2}(x) dx
\leq  C \|\varphi\|_{\va}^{2} \int_{\r^{2}}  \varphi^{2}(x) dx \leq C \|\varphi\|_{\va}^{4}.
$$

\[
\begin{split}
 &\big|\int_{\r^{2}}  \bigl( \int_{\r^{2}}  \frac{x_{2}- y_{2}}{|x - y|^{2}} U^2_{*} (y)\,dy \cdot \int_{\r^{2}}  \frac{x_{2}- z_{2}}{|x - z|^{2}} \varphi^{2}(z)  dz  \bigr)  \varphi^{2}(x) dx \big|\\ &\leq C \|\varphi\|_{\va}^{2}\int_{\r^{2}}\varphi^{2}(x) dx\leq C  \|\varphi\|_{\va}^{4},\\
 \end{split}
\]

\[
\begin{split}
&\big|\int_{\r^{2}}  \bigl(  \int_{\r^{2}}  \frac{x_{2}- y_{2}}{|x - y|^{2}} \varphi^{2}(y)  dy  \bigr)^{2} U_{*}(x) \varphi(x)\, dx\big|\\
\leq C &\va^{-2} \|\varphi\|_{\va}^{4}  \int_{\r^{2}} U_{*}(x) \varphi(x) dx
\leq   C \va^{-1} \|\varphi\|_{\va}^{5},\\
\end{split}
\]

\[
\begin{split}
&\big|\int_{\r^{2}}\bigl( \int_{\r^{2}}  \frac{x_{2}- y_{2}}{|x - y|^{2}} U_{*}(y) \varphi(y)  dy \cdot \int_{\r^{2}}\frac{x_{2}- z_{2}}{|x - z|^{2}} \varphi^{2}(z)  dz \bigr)\varphi^{2}(x)dx\big|\\
&\leq C\va^{-1}\|\varphi\|^{3}_{\va}\int_{\r^{2}} \varphi^{2}(x) dx\leq C\va^{-1} \|\varphi\|^{5}_{\va},\\
\end{split}
\]
and
$$
 \int_{\r^{2}}  \bigl(  \int_{\r^{2}}  \frac{x_{1}- y_{1}}{|x - y|^{2}} \varphi^{2}(y)  dy  \bigr)^{2} \varphi^{2}(x) dx \leq  C \va^{-2} \|\varphi\|_{\va}^{4}  \int_{\r^{2}} \varphi^{2}(x) dx
 \leq   C \va^{-2} \|\varphi\|_{\va}^{6} .
$$
The rest terms in  $R_{2,\va}$ can be estimated in the same way. The assertion follows by putting all these estimates together.
\end{proof}

\bigskip

\begin{lemma}\label{lm3.2}
There holds
\begin{equation}
\|\ell_{\va}\| \leq C \va^{1+\theta} \|\varphi\|_{\va} + C \sum_{i=1}^{k} \va | V( y^{i})-V(x^{0}) | + C\va e^{-\min\{\frac {p-1}2,1\}\frac{|y^i-y^j|}{\va}}\|\varphi\|_{\va} .
\end{equation}
\end{lemma}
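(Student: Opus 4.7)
The plan is to split $\ell_\va$ into three natural blocks: (a) the potential-error term $\int_{\r^2}(V(x)-V(x^0))U_*\varphi\,dx$; (b) the nonlinear interaction $\int_{\r^2}\bigl(\sum_i U_{\va,y^i}^{p-1}-U_*^{p-1}\bigr)\varphi\,dx$; and (c) the six Chern--Simons integrals. Each is linear in $\varphi$, and I would bound each as a functional in $\varphi$ and then combine. Blocks (a) and (b) will produce exactly the three terms appearing in the claimed bound, while the Chern--Simons integrals will be strictly lower order.

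For block (a), I would localize at each peak via $V(x)-V(x^0)=[V(x)-V(y^i)]+[V(y^i)-V(x^0)]$ and expand $U_*=\sum_i U_{\va,y^i}$. Rescaling $x=\va z+y^i$, the first summand pairs $\va^{\theta}|z|^{\theta}U(z)$ (by the H\"older assumption $(V_1)$) against $\varphi(\va z+y^i)$; Cauchy--Schwarz together with $\|\varphi(\va\cdot+y^i)\|_{L^2}=\va^{-1}\|\varphi\|_{L^2}$ and finiteness of $\int_{\r^2}|z|^{2\theta}U^2$ yields a contribution of order $\va^{1+\theta}\|\varphi\|_\va$. The second summand is a scalar times $\int U_{\va,y^i}\varphi$, and $\|U_{\va,y^i}\|_{L^2}=O(\va)$ produces the claimed $\sum_i\va|V(y^i)-V(x^0)|\|\varphi\|_\va$.

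For block (b), I would split into the regimes $p\ge 3$ and $2<p\le 3$, using respectively the elementary algebraic inequalities $|(\sum_i a_i)^{p-1}-\sum_i a_i^{p-1}|\le C\sum_{i\ne j}a_i^{p-2}a_j$ and $\le C\sum_{i\ne j}a_i^{(p-1)/2}a_j^{(p-1)/2}$. Cauchy--Schwarz against $\varphi$ reduces matters to $\bigl(\int U_{\va,y^i}^{2\alpha}U_{\va,y^j}^{2\beta}\,dx\bigr)^{1/2}$ with either $(\alpha,\beta)=(p-2,1)$ or $((p-1)/2,(p-1)/2)$. A change of variables and the exponential decay in \eqref{eq1a} give this integral the scaling $\va\cdot e^{-\min(\alpha,\beta)|y^i-y^j|/\va}$, and a direct check shows $\min(\alpha,\beta)=\min\{(p-1)/2,1\}$ in both regimes, matching the exponent in the claim.

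Finally, for block (c), I would recycle the bounds already exploited in the proof of Proposition~\ref{energy-prop} and Lemma~\ref{lm3.1}: the uniform estimate $\bigl|\int\frac{x_k-y_k}{|x-y|^2}U_*^2\,dy\bigr|\le C\va$ and the weighted-convolution inequality \eqref{lm3.1-4} giving $\bigl|\int\frac{x_k-z_k}{|x-z|^2}U_*(z)\varphi(z)\,dz\bigr|\le C\|\varphi\|_\va$. Combined with $\|U_*^a\|_{L^2}=O(\va)$ for $a\ge 1$, each of the six Chern--Simons integrals in $\ell_\va$ is $O(\va^3\|\varphi\|_\va)$ and is absorbed into $C\va^{1+\theta}\|\varphi\|_\va$ (since $\theta\le 1$). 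The main obstacle will be the case-split in block (b), where the algebraic inequality and the corresponding choice of $(\alpha,\beta)$ must be tracked carefully to produce the sharp universal exponent $\min\{(p-1)/2,1\}$ valid for all $p>2$; everything else is routine rescaling combined with Cauchy--Schwarz.
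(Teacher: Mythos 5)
Your proposal follows essentially the same route as the paper's proof: the same three-block splitting of $\ell_{\va}$, the same localization $V(x)-V(x^0)=[V(x)-V(y^i)]+[V(y^i)-V(x^0)]$ with rescaling and Cauchy--Schwarz for the potential term, the same $p>3$ versus $2<p\leq 3$ case split with the same algebraic inequalities for the interaction term, and the same $O(\va^{3}\|\varphi\|_{\va})$ bound for the Chern--Simons integrals, which are then absorbed. The argument is correct and there are no substantive differences.
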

\begin{proof}
By assumption $(V_1)$, one has
\begin{equation}\label{lm3.2-1}\begin{split}
& \big|\int_{\r^2}\bigl(V(x)-V(x^0)\bigr)U_{*}\varphi dx\big|
=  \big|\sum_{i=1}^{k} \int_{\r^{2}}  \bigl(V(x)-V(x^0)\bigr) U_{\va, y^{i}} \varphi\,dx\big| \\
= & \sum_{i=1}^{k} \int_{\r^{2}}  \bigl(V(x)-V(y^{i})\bigr) U_{\va, y^{i}} \varphi\,dx  + \sum_{i=1}^{k} \int_{\r^{2}}  \bigl(V(y^{i})-V(x^0)\bigr) U_{\va, y^{i}} \varphi\,dx\big| \\
\leq &  \sum_{i=1}^{k} \Bigl( \int_{\r^{2}} [\bigl(V(x)-V(y^{i})\bigr) U_{\va, y^{i}}]^{2} dx\Bigr)^{\frac12} \|\varphi\|_{\va}\\
 &+   \sum_{i=1}^{k} \Bigl( \int_{\r^{2}} [\bigl(V(y^{i})-V(x^{0})\bigr) U_{\va, y^{i}}]^{2} dx\Bigr)^{\frac12} \|\varphi\|_{\va} \\
= & \sum_{i=1}^{k} \va \Bigl( \int_{\r^{2}} [\bigl(V(\va x + y^{i} ) -V(y^{i})\bigr) U(x) ]^{2} dx\Bigr)^{\frac12} \|\varphi\|_{\va} \\
&+   \sum_{i=1}^{k} \va | V( y^{i})-V(x^{0}) | \Bigl( \int_{\r^{2}} U^{2}(x) dx\Bigr)^{\frac12} \|\varphi\|_{\va} \\
\leq & C \va^{1+\theta} \|\varphi\|_{\va} + C \sum_{i=1}^{k} \va | V( y^{i})-V(x^{0}) |.
\end{split}
\end{equation}
On the other hand, we have
\begin{eqnarray}\label{lm3.2-2}
&&\Big|\int_{\r^2}\sik U_{\va, y^i}^{p-1}\varphi-\Big(\sik U_{\va, y^i}\Big)^{p-1}\varphi dx\Big|
=\left\{\begin{array}{ll}
       O\Bigl(\ds\int_{\r^N}\sum\limits_{i\neq j}U_{\va,y^i}^{p-2}U_{\va,y^j}\varphi dx\Bigr),\ \ \ & \hbox{$\text{if}\,\,p>3$},\vspace{2mm}\nonumber\\
        O\Bigl(\ds\int_{\r^N}\sum\limits_{i\neq j}U_{\va,y^i}^{\frac{p-1}{2}}U_{\va,y^j}^{\frac{p-1}{2}}\varphi dx\Bigr),\ \ \ & \hbox{$\text{if}\,\,2<p\leq3$}\nonumber\\
    \end{array}\right.\\
&\leq&C\va e^{-\min\{\frac {p-1}2,1\}\frac{|y^i-y^j|}{\va}}\|\varphi\|_{\va}.
\end{eqnarray}
As the proof of  Lemma \ref{lm3.1}, we may verify
\begin{eqnarray}\label{lm3.2-3}
&& \big|\int_{\r^{2}} \bigl(  \int_{\r^{2}} \frac{x_{2} - y_{2}}{|x- y|^{2}} U^{2}_{*}(y) dy \bigr)^{2} U_{*}(x) \varphi(x) dx+\int_{\r^{2}} \bigl(  \int_{\r^{2}} \frac{x_{1} - y_{1}}{|x- y|^{2}} U^{2}_{*}(y) dy \bigr)^{2} U_{*}(x) \varphi(x) dx\big|\nonumber\\
 & \leq &  C \va^{2} \big|\int_{\r^{2}} U_{*}(x) \varphi(x) dx\big|\leq C\va^{3} \|\varphi\|_{\va},
\end{eqnarray}
and
\begin{eqnarray}\label{lm3.2-4}
&& |\int_{\r^{2}}  \bigl( \int_{\r^{2}}  \frac{x_{2} - y_{2}}{|x- y|^{2}} U^{2}_{*}(y) dy \cdot \int_{\r^{2}}  \frac{x_{2} - z_{2}}{|x- z|^{2}} U_{*}(z) \varphi(z) dz \bigr)U_{*}^{2} dx|\nonumber\\
&&+|\int_{\r^{2}}  \bigl( \int_{\r^{2}}  \frac{x_{1} - y_{21}}{|x- y|^{2}} U^{2}_{*}(y) dy \cdot \int_{\r^{2}}  \frac{x_{2} - z_{2}}{|x- z|^{2}} U_{*}(z) \varphi(z) dz \bigr)U_{*}^{2} dx|\nonumber\\
&\leq& C \va \|\varphi\|_{\va}\int_{\r^{2}}  U^{2}_{*}(x)dx\leq C\va^{3} \|\varphi\|_{\va}.
\end{eqnarray}
So the result follows from \eqref{lm3.2-1}-\eqref{lm3.2-4}.
\end{proof}
\bigskip

Since $L_{\va}$ is bounded and bi-linear operator, the following lemma can be obtained directly.

\begin{lemma}
There exists a positive constant, independent of $\va$, such that
$$
\|L_{\va}v\|\leq C\|v\|, \ \ v\in E.
$$
\end{lemma}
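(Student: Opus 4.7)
The plan is to polarize the quadratic form $L_\va$ into a symmetric bilinear form $L_\va(\varphi,\psi)$ and prove the two-sided estimate $|L_\va(\varphi,\psi)|\le C\|\varphi\|_\va\|\psi\|_\va$ uniformly in $\va$. Via the Riesz representation associated to $\langle\cdot,\cdot\rangle_\va$, this is equivalent to the claimed operator bound $\|L_\va v\|\le C\|v\|$ on $E$.

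First I would split $L_\va = L_{1,\va}+L_{2,\va}$ and treat the two pieces separately. For the polarization of $L_{1,\va}$, the kinetic piece $\va^{2}\int\nabla\varphi\cdot\nabla\psi$ is bounded by $\|\varphi\|_\va\|\psi\|_\va$ via Cauchy--Schwarz, and the potential piece $\int V\varphi\psi$ is bounded by $\|V\|_\infty\|\varphi\|_\va\|\psi\|_\va$ using the local boundedness of $V$ from $(V_1)$. The only delicate summand is $(p-1)\int U_*^{p-2}\varphi\psi\,dx$, for which I would apply Hölder's inequality with exponents $\tfrac{p}{p-2},p,p$:
\[
\int_{\r^{2}}U_*^{p-2}|\varphi\psi|\,dx\le \|U_*\|_{L^p}^{p-2}\,\|\varphi\|_{L^p}\,\|\psi\|_{L^p}.
\]
A change of variable gives $\|U_*\|_{L^p}^{p}\le Ck\va^{2}$, so $\|U_*\|_{L^p}^{p-2}\le C\va^{2(p-2)/p}$, while \eqref{1m3.1-0} yields $\|\varphi\|_{L^p}\le C\va^{(2-p)/p}\|\varphi\|_\va$ (and similarly for $\psi$). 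The exponents of $\va$ cancel exactly, leaving a clean bound $C\|\varphi\|_\va\|\psi\|_\va$.

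For the Chern--Simons contribution $L_{2,\va}$, every summand is a chain of Riesz--type convolutions with the factor $\tfrac{x_{i}-y_{i}}{|x-y|^{2}}$, paired with $U_*^{2}$, $U_*\varphi$, $U_*\psi$ or $\varphi\psi$. The needed pointwise inequalities are already at my disposal: from the proof of Proposition \ref{energy-prop} one has $\bigl|\int\tfrac{x_{i}-y_{i}}{|x-y|^{2}}U_*^{2}(y)\,dy\bigr|\le C\va$, and \eqref{lm3.1-4} supplies $\bigl|\int\tfrac{x_{i}-y_{i}}{|x-y|^{2}}U_*(y)\varphi(y)\,dy\bigr|\le C\|\varphi\|_\va$. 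Combining these pointwise bounds with Cauchy--Schwarz and $\|\varphi\|_{L^{2}}\le\|\varphi\|_\va$ on the outer integral, each term in the polarized $L_{2,\va}$ picks up at least a factor $\va^{2}$ (either from two $U_*^{2}$-convolutions, or from one such convolution together with an outer integral $\int U_*^{2}\sim\va^{2}$). Every term is thus controlled by $C\va^{2}\|\varphi\|_\va\|\psi\|_\va$, comfortably inside the required bound.

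Adding the pieces gives $|L_\va(\varphi,\psi)|\le C\|\varphi\|_\va\|\psi\|_\va$ with $C$ independent of $\va$, which is exactly the asserted operator estimate. The only genuinely non-trivial step is the cancellation of $\va$-powers in the $U_*^{p-2}\varphi\psi$ term of $L_{1,\va}$, which rests on the sharp $\va$-dependence of the Gagliardo--Nirenberg type estimate \eqref{1m3.1-0}; once that is in place, every remaining term is either manifestly bounded or explicitly small in $\va$, and the proof is routine bookkeeping.
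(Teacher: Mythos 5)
Your proof is correct, and it supplies more than the paper does: the paper disposes of this lemma in a single sentence (``Since $L_{\va}$ is bounded and bi-linear operator, the following lemma can be obtained directly''), with no estimates at all. Your polarization-plus-term-by-term bookkeeping is exactly the argument that one-liner is standing in for, and your bounds on the Chern--Simons part $L_{2,\va}$ coincide with the estimates the paper only writes out later, inside the proof of Lemma \ref{lm3.4}, where each such summand is shown to contribute $O(\va^{2})\|v\|_{\va}\|\varphi\|_{\va}$; so your route and the paper's implicit one agree. Two small remarks. First, your H\"older step for $(p-1)\int U_{*}^{p-2}\varphi\psi$ is correct (the powers of $\va$ do cancel exactly), but it is unnecessarily delicate: since $U$ is bounded, $U_{*}^{p-2}\leq C$ pointwise and Cauchy--Schwarz in $L^{2}$ already gives $C\|\varphi\|_{\va}\|\psi\|_{\va}$, so the sharp scaling of \eqref{1m3.1-0} is only genuinely needed in Lemma \ref{lm3.1}. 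Second, the bound $\int V|\varphi\psi|\leq\|V\|_{\infty}\|\varphi\|_{L^{2}}\|\psi\|_{L^{2}}$ requires $V$ to be \emph{globally} bounded, not merely locally bounded as you say: $(V_{1})$ as literally stated (global H\"older continuity plus a positive infimum) allows $V$ to grow like $|x|^{\theta}$, so global boundedness of $V$ is an implicit hypothesis here --- one the paper itself relies on throughout, e.g.\ whenever it identifies $\int V v^{2}$ with $O(\|v\|_{\va}^{2})$. With that (shared) caveat noted, your argument is complete.
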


\bigskip
Next, we show that $L_{\va}$ is invertible in $E$.
\begin{lemma}\label{lm3.4} There are positive constants $\va_0$ and $\mu_0$, such that for any $0<\va<\va_0$
and $v\in E$,
\begin{equation}\label{lm3.4-0}
\|L_{\va}v\|\geq \mu_0\|v\|.
\end{equation}
\end{lemma}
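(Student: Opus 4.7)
The plan is to argue by contradiction following the standard linearization strategy, adapted to handle the Chern-Simons nonlocal terms. Suppose \eqref{lm3.4-0} fails: then there exist sequences $\varepsilon_n \to 0^+$, ${\bf y}_n \in D_k^{\varepsilon_n,\delta}$ and $v_n \in E$ with $\|v_n\|_{\varepsilon_n}=1$ and $\|L_{\varepsilon_n} v_n\|\to 0$. Testing against $v_n$ itself gives $L_{\varepsilon_n}(v_n)\to 0$. The first move is to dispose of the Chern-Simons part $L_{2,\varepsilon_n}(v_n)$: every term there contains at least one convolution factor of the form $\int_{\r^2}\frac{1}{|x-y|}U_*^2(y)\,dy$ or $\int_{\r^2}\frac{1}{|x-y|}U_*(y)\varphi(y)\,dy$, and by the change-of-variable estimate already used in Proposition \ref{energy-prop} and Lemma \ref{lm3.1} these produce a factor of $\varepsilon_n$ while the remaining $U_*^2\,dx$ or $\varphi^2\,dx$ integrations give at most $\varepsilon_n^2$ or $\|v_n\|_{\varepsilon_n}^2$. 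Thus $|L_{2,\varepsilon_n}(v_n)|\leq C\varepsilon_n^{a}\|v_n\|_{\varepsilon_n}^2$ for some $a>0$, and the same bound holds when one slot is replaced by a test function $\psi$ with $\|\psi\|_{\varepsilon_n}\leq 1$. Consequently $L_{1,\varepsilon_n}(v_n)\to 0$ as well.

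Next I would perform the standard blow-up around each peak. Define $w_n^i(x):=v_n(\varepsilon_n x + y_n^i)$; a change of variables shows $\|w_n^i\|_{H^1(\r^2)}\leq C$, so up to a subsequence $w_n^i\rightharpoonup w^i$ weakly in $H^1(\r^2)$ and strongly in $L^q_{\text{loc}}(\r^2)$ for every $q\geq 2$. The orthogonality constraint
\[
\langle \tfrac{\partial U_{\varepsilon_n,y_n^i}}{\partial y_l^i},v_n\rangle_{\varepsilon_n}=0
\]
rescales to $\int_{\r^2}\bigl(\nabla \partial_l U\cdot \nabla w^i+\partial_l U\, w^i\bigr)\,dx=0$ in the limit, i.e.\ $w^i$ is orthogonal in $H^1(\r^2)$ to $\partial U/\partial x_l$, $l=1,2$. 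To identify the equation satisfied by $w^i$, one picks an arbitrary $\psi\in C_0^\infty(\r^2)$, sets $\psi_n^i(x):=\psi\bigl((x-y_n^i)/\varepsilon_n\bigr)$ and tests $L_{\varepsilon_n} v_n$ against $\psi_n^i$. The separation condition $|y_n^i-y_n^j|/\varepsilon_n\geq |\ln\varepsilon_n|^{1/2}$ ensures that the cross interactions $U_{\varepsilon_n,y^j}$ with $j\neq i$ contribute $o(1)$ on the support of $\psi_n^i$, so $U_*^{p-2}$ can be replaced by $U^{p-2}(\cdot -y_n^i)/\varepsilon_n$. Passing to the limit, and using the Chern-Simons smallness above, we find
\[
-\Delta w^i+V(x^0)w^i=(p-1)U^{p-2}w^i \quad \text{in } \r^2.
\]

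By the non-degeneracy of $U$ recalled in the paragraph above \eqref{eq1a}, the kernel of this linearized operator is exactly spanned by $\{\partial U/\partial x_l:l=1,2\}$; combined with the orthogonality obtained from the constraint, we conclude $w^i\equiv 0$ for each $i$. To reach a contradiction I would revisit $L_{1,\varepsilon_n}(v_n)\to 0$, which reads
\[
\varepsilon_n^2\!\int_{\r^2}|\nabla v_n|^2+\int_{\r^2}V(x)v_n^2-(p-1)\!\int_{\r^2}U_*^{p-2}v_n^2\to 0.
\]
The first two terms together bound $\min(1,\inf V)\|v_n\|_{\varepsilon_n}^2$ from below, while after the change of variable $x=\varepsilon_n z+y_n^i$ the last integral becomes $(p-1)\sum_i\int_{\r^2}U^{p-2}(w_n^i)^2\,dz+o(1)$; the exponential decay of $U$ together with the separation of the $y_n^i$'s absorbs the cross terms. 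Since $w_n^i\to 0$ in $L^q_{\text{loc}}$ and $U^{p-2}$ decays exponentially, this last integral tends to $0$. We therefore obtain $\|v_n\|_{\varepsilon_n}^2\to 0$, contradicting $\|v_n\|_{\varepsilon_n}=1$.

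The main obstacle is the very first step: verifying in full detail that the full nonlocal operator $L_{2,\varepsilon}$, including the mixed terms where one convolution depends on $\varphi$ rather than on $U_*$, is indeed of order $o(1)\|\varphi\|_\varepsilon^2$ uniformly in ${\bf y}\in D_k^{\varepsilon,\delta}$. The subsequent blow-up and kernel identification are then standard, but one must also keep the cross-peak interactions under control using the separation $|y^i-y^j|/\varepsilon\geq |\ln\varepsilon|^{1/2}$, which is precisely why this quantitative spacing was built into the definition of $D_k^{\varepsilon,\delta}$.
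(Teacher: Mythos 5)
Your overall strategy coincides with the paper's: contradiction, disposal of the nonlocal Chern--Simons part $L_{2,\va}$ by the convolution estimates of Lemma \ref{lm3.1}, blow-up $\tilde v_{n,i}(x)=v_n(\va_n x+y^{i,n})$, identification of the limit equation, non-degeneracy of $U$ plus the orthogonality constraint to force the weak limits to vanish, and the final quadratic-form computation $\langle L_{\va_n}v_n,v_n\rangle=(1+o(1)+o_R(1))\|v_n\|_{\va_n}^2$ to reach the contradiction.

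There is, however, one genuine gap at the step where you ``test $L_{\va_n}v_n$ against $\psi_n^i$'' with $\psi_n^i(x)=\psi((x-y_n^i)/\va_n)$, $\psi\in C_0^\infty(\r^2)$. The contradiction hypothesis gives smallness of $\langle L_{\va_n}v_n,\varphi\rangle$ only for $\varphi$ in the constrained space $E_n$ (this is the operator whose invertibility is actually used in Proposition \ref{prop3.2}), and $\psi_n^i$ is in general \emph{not} orthogonal to the functions $\partial U_{\va_n,y^j}/\partial y^j_m$, so you cannot directly conclude $\langle L_{\va_n}v_n,\psi_n^i\rangle=o(1)\|\psi_n^i\|_{\va_n}$. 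The paper fills exactly this hole: it writes $\psi=P_{\va_n}\psi+\sum_{l,i}\alpha_{\va_n,l,i}\,\partial U_{\va_n,y^i}/\partial y^i_l$ as in \eqref{non-3}, obtains the identity \eqref{lm3.4-4} with unknown coefficients $\gamma_{\va_n,l,i}$, and then determines $\gamma_{\va_n,l,i}=o(\va_n)$ by choosing $\psi=\partial U_{\va_n,y^i}/\partial y^i_l$, which upgrades the smallness to \emph{all} $\psi\in H^1(\r^2)$ in \eqref{lm3.4-5}. Without this projection-and-coefficient estimate your derivation of the limiting equation $-\Delta w^i+V(x^0)w^i=(p-1)U^{p-2}w^i$ is not justified; the rest of your argument (including the concluding step, which only tests against $v_n\in E_n$ and is therefore fine) goes through once this is supplied.
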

\begin{proof}
We argue indirectly. Suppose on the contrary that there exist $\va_n\rightarrow0$, $v_n\in E_{n}$ and ${\bf {y} }^{ n}=(y^{1,n},\cdots,y^{k,n})\in D_k^{\va_n,\delta}$ such that
\begin{equation}\label{lm3.4-1}
\langle L_{\va_n}v_n,\varphi\rangle=o_n(1)\|v_n\|_{\va_n}\|\varphi\|_{\va_n},\ \ \ \ \,\,\forall\,\varphi\in E_{n}.
\end{equation}
Without loss of generality, we assume that $\|v_n\|_{\va_n}=\va_n$. For any $\varphi \in E$, we have
\begin{equation}\begin{split}
 \bigg|\int_{\r^{2}} \bigl( \frac{1}{4\pi} \int_{\r^{2}} \frac{x_{2} - y_{2}}{|x- y|^{2}} U^{2}_{*}(y) \,dy \bigr)^{2} v_{n} \varphi  \,dx\bigg| \leq C \va_{n}^{2} \|v_{n}\|_{\va_{n}} \|\varphi\|_{\va_{n}}
\end{split}
\end{equation}
and
\begin{equation}\begin{split}
& \frac{1}{4\pi^{2}}\bigg|\int_{\r^{2}}  \bigl( \int_{\r^{2}}  \frac{x_{2} - y_{2}}{|x- y|^{2}} U^{2}_{*}(y) dy \cdot \int_{\r^{2}}  \frac{x_{2} - z_{2}}{|x- z|^{2}} U_{*}(z)v_{n} dz \bigr)U_{*}(x) \varphi(x) dx\bigg|\\
\leq & C \va_{n} \int_{\r^{2}} \int_{\r^{2}}  \frac{1}{|x- z|} U_{*}(z)|v_{n}| dz \bigr)U_{*}(x) |\varphi(x)| dx \\
\leq & C \va_{n} \|v_{n}\|_{\va_{n}} \int_{\r^{2}}U_{*}(x) \varphi(x) dx \\
\leq & C \va_{n}^{2} \|v_{n}\|_{\va_{n}} \|\varphi\|_{\va_{n}}
\end{split}
\end{equation}
Similarly, we get
\[ \big|\langle L_{2,\va_{n}} v_{n}, \varphi \rangle\big| \leq C \va^{2}_{n} \|v_{n}\|_{\va_{n}} \|\varphi\|_{\va_{n}}, \]
which implies
\begin{equation}\label{lm3.4-2}
\langle L_{1,\va_{n}} v_{n}, \varphi \rangle = o(1)\|v_{n}\|_{\va_{n}} \|\varphi\|_{\va_{n}} , \text{ for } \varphi \in E_{n}.
 \end{equation}
Fix $i\in\{1,\cdots,k\}$ and let
 $$
 \tilde{v}_{n,i}(x)=v_n(\va_n x+y^{i,n}).
 $$
 We have
$$
\va_n^{2}\int_{\r^{2}}|\nabla v_{\va}|^{2} dx  +\int_{\r^2}V(x)v_n^2dx=O(\va_n^2),
$$
and
\begin{equation}\label{lm3.4-3}
\int_{\r^{2}}|\tilde{v}_{n,i}|^{2} dx +\int_{\r^2}V(\va_nx+y^{i,n})\tilde{v}_{n,i}dx\leq C.
\end{equation}
Then it follows from \eqref{lm3.4-3} that $\{\tilde{v}_{n,i}\}$ is bounded in $H^1(\r^2)$. Thus, there is a subsequence still denote by $\tilde{v}_{n,i}$,
such that for $n\rightarrow\infty,$
$$
\tilde{v}_{n,i} \rightharpoonup v_i\ \ \ \hbox{in}\,\ H^1(\r^2),\,\,\,\,\,\ \
\tilde{v}_{n,i} \rightarrow v_i\ \ \ \hbox{in}\,\ L_{loc}^{p}(\r^2),\ 2\leq p< +\infty.
$$

Now we claim that $v_{i} = 0 $. For any $\psi \in H^{1}(\r^{N})$, we define
\begin{equation}\label{non-3}
P_{\va_{n}}\psi = \psi - \sum_{l=1}^{2}\sum_{i=1}^{k} \alpha_{\va_{n, l , i}}\frac{\partial U_{\va_{n}, y^{i}}}{\partial y^{i}_{l}} \in E_{n},
\end{equation}
where
\[\alpha_{\va_{n}, l, i} = \sum_{m=1}^{2} \sum_{j=1}^{k} C_{\va_{n}, l, i}^{m, j}  \langle  \frac{\partial U_{\va_{n}, y^{j}}}{\partial y^{j}_{m}}, \psi \rangle_{\va_{n}},  \]
for some constants $C_{\va_{n}, l, i}^{m, j}$.
Let
\[\beta_{\va_{n}, l, i} = \langle L_{\va_{n}, 1} v_{n}, \frac{\partial U_{\va_{n}, y^{i}}}{\partial y^{i}_{l}} \rangle_{\va_{n}}. \]
Then, we have
\begin{equation}\label{lm3.4-4}\begin{split}
 \langle L_{1,\va_{n}} v_{n}, \psi \rangle_{\va_{n}}
= & \langle L_{\va_{n}, 1} v_{n}, P_{\va_{n}}\psi \rangle_{\va_{n}} + \sum_{l=1}^{2}\sum_{i=1}^{k}  \alpha_{\va_{n, l , i}}\beta_{\va_{n}, l, i} \\
= & o(1)  \|v_{n}\|_{\va_{n}} \|\psi\|_{\va_{n}} + \sum_{l=1}^{2}\sum_{i=1}^{k}  \alpha_{\va_{n, l , i}}\beta_{\va_{n}, l, i}\\
= & o(1) \|v_{n}\|_{\va_{n}}  \|\psi\|_{\va_{n}} + \sum_{l=1}^{2}\sum_{i=1}^{k}  \gamma_{\va_{n, l , i}} \langle  \frac{\partial U_{\va_{n}, y^{i}}}{\partial y^{i}_{l}}, \psi \rangle_{\va_{n}},
\end{split}
 \end{equation}
 where $\gamma_{\va_{n, l , i}} =  \sum_{m=1}^{2} \sum_{j=1}^{k} C_{\va_{n}, m, j}^{l, i}  \beta_{\va_{n}, m, j} . $
 Choosing $\varphi =\frac{\partial U_{\va_{n}, y^{i}}}{\partial y^{i}_{l}} $ in \eqref{lm3.4-4}, we can estimate
 \[\gamma_{\va_{n, l , i}} = o(\va_{n}). \]
 Hence, \eqref{lm3.4-4} becomes
 \begin{equation}\label{lm3.4-5}\begin{split}
 \langle L_{1, \va_{n}} v_{n}, \psi \rangle_{\va_{n}}
= & o(1) \|v_n\|_{\va_{n}}   \|\psi\|_{\va_{n}}, \, \, \forall \psi \in H^{1}(\r^{2}).
\end{split}
 \end{equation}
 Let $\bar{\varphi}_{n}(x) = \varphi(\frac{x - y^{i, n}}{\va}) $ and substitute it into \eqref{lm3.4-5},
we obtain
\begin{equation*}\begin{split}
 & \int_{\r^{2}} \nabla \tilde{v}_{n , i} \nabla \varphi dx + \ds\int_{\r^2}V(\va_{n} x + y^{i, n})  \tilde{v}_{n , i}  \varphi dx - (p-1) \int_{\r^2}U_{*}(\va x + y^{i, n})^{p-2} \tilde{v}_{n , i}  \varphi   dx\\
 &=   \va_{n}^{-2} \Bigl\{ \va_{n}^{2} \int_{\r^{2}} \nabla v_{n} \nabla \bar{\varphi}_{n}   dx + \ds\int_{\r^2}V(x)  v_{n} \bar{\varphi}_{n}  dx - (p-1) \int_{\r^2}U_{*}^{p-2} v_{n} \bar{\varphi}_{n} dx\Bigr\}\\
&=  \va_{n}^{-2} o(\va_{n})  \|\bar{\varphi}_{n}\|_{\va_{n}} = o(1) \|\varphi\|.
\end{split}
\end{equation*}
Therefore, $v_{i}$ satisfies the equation
\[ -\Delta v_{i} + V(y^{i}) v_{i} = (p-1) U^{p-2}v_{i}, \]
and the non-degeneracy of the solution $U$ gives
\[v_{i} = \sum_{i=1}^{2} c_{i} \frac{\partial U}{\partial x_{i}}. \]
Since $v_{n} \in E_{n}$, that is,
\begin{equation*}\label{non-2}
 \langle v_{n}, \frac{\partial U_{\va_{n}, y_{l}^{i}}}{\partial y_{l}^{i}} \rangle _{\va_{n}}=0,
\end{equation*}
we deduce that
\[ \langle v_{i}, \frac{\partial U}{\partial x_{l}} \rangle= 0, l =1, 2. \]
which gives $c_{1} = c_{2} = 0$, and then $v_i=0.$ As a result,
\begin{eqnarray*}
o_n(1)\va_n^2
&=& \langle L_{\va_{n}} v_{n}, v_{n}\rangle =   \langle L_{\va_{n}, 1} v_{n}, v_{n}\rangle +  \langle L_{\va_{n}, 2 } v_{n}, v_{n}\rangle \\
& = & \|v_{n}\|^{2}_{\va_{n}} -   (p-1) \int_{\r^2}U_{*}^{p-2} v_{n}^{2}  dx + o(1)\|v_{n}\|^{2}_{\va_{n}} \\
& =&(1 + o(1)) \va_n^2 - (p-1) \int_{B_{R}(0)} U_{*}^{p-2} v_{n}^{2}  dx - (p-1) \int_{\r^{2} \setminus B_{R}(0)} U_{*}^{p-2} v_{n}^{2}  dx \\
& = & (1 + o(1)+ o_{R}(1))\va_{n}^{2},
\end{eqnarray*}
which is impossible for large $n$. Hence, the conclusion follows.
\end{proof}

\bigskip

\section{Proof of the main result}

\bigskip

In this section, we prove Theorem \ref{th2} by the reduction method.

\begin{proposition}\label{prop3.2} For $\va$ sufficiently small, there is a $C^1$ map from $D_k^{\va,\delta}$ to
$E$ and
$$
 J_\va'(\omega)\Big|_E=0.
$$
Moreover, there exists a constant $C>0$ independent of $\va$ small enough such that
\begin{equation*}
\|\omega\|_{\va}\leq  C\big(\va^{\frac{N}{2}+\min\{\theta,2\}}+\va^{\frac{N}{2}} \sum\limits_{i=1}^k (V(y^i)-V(x^0))+\va^{\frac{N}{2}}e^{-\min\{\frac p2,1\}\frac{|y^i-y^j|}{\va}}\big).
\end{equation*}
\end{proposition}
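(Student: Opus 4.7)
The plan is to apply the Lyapunov--Schmidt reduction via a Banach fixed-point argument in $E$. For fixed $\mathbf{y}\in D_k^{\va,\delta}$, finding a critical point $\omega\in E$ of $\mathcal{J}_\va(\mathbf{y},\cdot)\big|_E$ is equivalent, via the expansion \eqref{expand}, to solving
$$
\langle L_\va\omega,\psi\rangle_\va+\ell_\va(\psi)+\langle R'_\va(\omega),\psi\rangle=0,\qquad \forall\,\psi\in E.
$$
Lemma~\ref{lm3.4} shows that $L_\va$ induces an isomorphism of $E$ (via Riesz representation in $\langle\cdot,\cdot\rangle_\va$) whose inverse has operator norm bounded by $\mu_0^{-1}$ independently of $\va$. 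The equation is therefore equivalent to the fixed-point problem
$$
\omega=T_\va(\omega):=-L_\va^{-1}\bigl(\ell_\va+R'_\va(\omega)\bigr).
$$

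First I would work on the closed ball $B_\rho=\{\varphi\in E:\|\varphi\|_\va\leq \rho\}$, where
$$
\rho=C_0\Bigl(\va^{1+\min\{\theta,2\}}+\va\sum_{i=1}^k|V(y^i)-V(x^0)|+\va\, e^{-\min\{\frac{p-1}{2},1\}|y^i-y^j|/\va}\Bigr),
$$
and $C_0$ is chosen so that $\mu_0^{-1}\|\ell_\va\|\leq \rho/2$ by Lemma~\ref{lm3.2}. To prove $T_\va(B_\rho)\subset B_\rho$, I would plug the bound of Lemma~\ref{lm3.1} with $i=1$ into $\mu_0^{-1}\|R'_\va(\varphi)\|$: every term has the form $\va^a\|\varphi\|_\va^b$ with $b\geq 2$, and for $\|\varphi\|_\va\leq\rho=O(\va)$ one verifies $\va^a\rho^{b-1}=o(1)$, so that the image of $B_\rho$ lies well within $B_\rho$. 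Similarly, Lemma~\ref{lm3.1} with $i=2$ shows the operator norm of $R''_\va(\varphi)$ is $o(1)$ uniformly on $B_\rho$, whence $T_\va$ is a contraction. Banach's theorem produces a unique $\omega=\omega(\mathbf{y})\in B_\rho$ solving $J'_\va(\omega)|_E=0$, and the inequality $\|\omega\|_\va\leq 2\mu_0^{-1}\|\ell_\va\|$ together with Lemma~\ref{lm3.2} yields the quantitative bound in the statement.

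For the $C^1$ dependence of $\mathbf{y}\mapsto\omega(\mathbf{y})$, I would invoke the implicit function theorem on $F(\mathbf{y},\varphi):=L_\va\varphi+\ell_\va+R'_\va(\varphi)$, viewed as a map between Banach spaces. The partial derivative $D_\varphi F$ at the solution is $L_\va+R''_\va(\omega)$, a small perturbation of $L_\va$ by Lemma~\ref{lm3.1}, and hence invertible with uniform norm estimate by Lemma~\ref{lm3.4}. The smooth dependence of the projector onto $E$ on $\mathbf{y}$ is automatic since the defining constraints $\langle\varphi,\partial U_{\va,y^i}/\partial y_l^i\rangle_\va=0$ are linear with a uniformly non-degenerate Gram matrix for $\mathbf{y}\in D_k^{\va,\delta}$.

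The principal obstacle is controlling the nonlocal Chern--Simons portion $R_{2,\va}$ of the remainder, which unlike the classical Schr\"odinger setting contains terms of order $\va^{-1}\|\varphi\|_\va^5$ and $\va^{-2}\|\varphi\|_\va^6$. One must check, using the Hardy--Littlewood--Sobolev--type bounds established in Lemma~\ref{lm3.1} (in particular estimates \eqref{lm3.1-4}--\eqref{lm3.1-5}), that for $\|\varphi\|_\va=O(\va)$ every such higher-order contribution is of strictly smaller order than $\rho$, so that both the invariance $T_\va(B_\rho)\subset B_\rho$ and the contraction property survive. This balance between negative powers of $\va$ from the nonlocal kernels and the smallness of $\varphi$ is the delicate new feature absent in the reduction schemes of \cite{cp,kw}.
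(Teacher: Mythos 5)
Your proposal is correct and follows essentially the same route as the paper: the paper likewise rewrites the Euler--Lagrange equation on $E$ as $\omega=-L_\va^{-1}(\bar\ell_\va+R_\va'(\omega))$, runs a contraction mapping argument on a small ball $S_\va$ whose radius is dictated by Lemma~\ref{lm3.2}, uses Lemma~\ref{lm3.1} for invariance and the contraction property, Lemma~\ref{lm3.4} for the uniform invertibility of $L_\va$, and the implicit function theorem for the $C^1$ dependence on $\mathbf{y}$. The only differences are cosmetic (your radius omits the small loss $\va^{-\kappa}$ the paper builds into $S_\va$).
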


\begin{proof}
We will use the contraction theorem to prove it.
By the Lemma \ref{lm3.2}, $\ell(\omega)$ is a bounded linear
functional in $E$. The Riesz representation theorem implies that  there is
an $\bar{\ell}_\varepsilon\in E,$ such that
$$
\ell_\varepsilon(\omega)=\langle \bar{\ell}_\varepsilon,\omega\rangle.
$$
Therefore, finding a critical point for $J(\omega)$ is equivalent to
solving
\begin{equation}\label{prop3.2-1}
\bar{\ell}_\va+L_\va(\omega)+R'_\va(\omega)=0.
\end{equation}
By Lemma \ref{lm3.4}, $L_\va$ is invertible. Thus \eqref{prop3.2-1}
is equivalent to
$$
\omega=A(\omega):=-L_\varepsilon^{-1}(\bar{\ell}_\varepsilon+R_\varepsilon'(\omega)).
$$

We set
$$
S_{\va}:=\big\{\omega\in E:\ \ \|\omega\|_{\va}\leq \va^{1+\theta -\kappa}+\va^{1 -\kappa} \sum\limits_{i=1}^k (V(y^i)-V(x^0))+\va e^{(-\min\{\frac p2,1\}-\kappa)\frac{|y^i-y^j|}{\va}}\big \}
$$
for any small $\kappa>0$.

Now, we verify that $A$ is a contraction mapping from $S_\va$ to
itself. For $\omega\in S_\va$, by Lemmas
\ref{lm3.1} and \ref{lm3.2}, we obtain
\begin{eqnarray*}
\|A(\omega)\|
&\leq&C(\|\bar{\ell}_\va\|+\|R_\va'(\omega)\|)\\
&\leq& C\big(\va^{1 +\theta }+\va \sum\limits_{i=1}^k (V(y^i)-V(x^0))+\va e^{-\min\{\frac p2,1\}\frac{|y^i-y^j|}{\va}}\big)\\
&\leq&  \va^{1+\theta -\kappa}+\va^{1 -\kappa} \sum\limits_{i=1}^k (V(y^i)-V(x^0))+\va e^{(-\min\{\frac p2,1\}-\kappa)\frac{|y^i-y^j|}{\va}}.
\end{eqnarray*}
Then,  $A$ maps $S_\va$ to $S_\va$.

On the other hand, for any $\omega_1,\omega_2 \in S_\va$,
\begin{eqnarray*}
\|A(\omega_1)-A(\omega_2)\|&=&\|L_\va^{-1}R_\va'(\omega_1)-L_\va^{-1}R_\va'(\omega_2)\|\\
&\leq& C\|R_\va'(\omega_1)-R_\va'(\omega_2)\|\\
&\leq& C\|R_\va''(\theta\omega_1+(1-\theta)\omega_2)\|\|\omega_1-\omega_2\|_{\va}\\
&\leq&\frac{1}{2}\|\omega_1-\omega_2\|_{\va}.
\end{eqnarray*}
So $A$ is a contraction map from $S_\va$ to $S_\va$. Consequently, applying the contraction mapping
theorem and implicit function
theorem, the conclusion is completed.

\end{proof}

Now, we are ready to prove our main theorem. Let
$\varphi_{\va,\bf {y}}$ be the map obtained in Proposition \ref{prop3.2}.

Define
$$
F({\bf {y} })=I(U_{*}+\varphi_{\va,{\bf {y} }}),\ \ \ \forall\, \varphi_\va\in S_\va.
$$
It is well known that if $\bf {y}$ is a critical point of $F(\bf {y})$, then $U_{*}+\varphi_{\va,{\bf {y} }}$ is a solution of our problem.

\begin{proof}[\textbf{Proof of Theorem \ref{th2}}] By Lemmas \ref{lm3.4} and \ref{lm3.1}, Propositions \ref{prop3.2} and \ref{energy-prop}, we find
\begin{eqnarray}\label{th1-1}
F({\bf {y}})&=& I_{\va}(U_{*})+\ell_{\va}(\varphi_{y'})+\frac{1}{2}\langle L_{\va}(\varphi_{{\bf {y}}}),\varphi_{{\bf {y}}}\rangle
+R_{\va}(\varphi_{{\bf {y}}})\nonumber\\
&=&I_\va(U_{\va,y})+O(\|\bar{\ell}_\va\|\|\varphi_{{\bf {y}}}\|+\|\varphi_{{\bf {y}}}\|^2)\nonumber\\
&=&A \va^2   - \frac12 \sik \big(V(x^0)-V(y^i)\big)  \va^2\int_{\r^2}U^2 dx - C_{1} \va^2\sum_{i\neq j}^ke^{\frac{-|y^i-y^j|}{\va}} \nonumber\\
& &+ O\bigl( \va^{2+\theta} +\va^2\sum\limits_{i\neq j}^ke^{\frac{-(1+\sigma)|y^i-y^j|}{\va}} + \va^{4}  \bigr) \nonumber\\
\end{eqnarray}

Consider the following maximizing problem
$$
\max\limits_{{\bf {y}}\in D_{\va,\delta}^k}F({\bf {y}} ).
$$

Since $F\in C^1,$ we can assume that $F$ is achieved by some ${{\bf {y}}_\va}$ in $\overline{D_{\va,\delta}^k}$. We will prove that ${\bf {y}}_\va$ is an interior point of $D_{\va,\delta}^k$.

Let $\bar{y}^i=x_0+M \va|\ln\va| e_i$, for some constant $M>0$, vectors $e_1,\cdots,e_k$ with $|e_i-e_j|=1$ for $i\neq j$.
Thus, for $M>0$ large, $\va$ small enough and $\bar y\in D_k^{\va,\delta}$, we have
\begin{eqnarray}\label{th1-2}
&&A\va^2-C_1\va^{2+\theta}|\ln\va|^\theta\nonumber\\
&\leq& A\va^2 -B_1\va^{2 +\theta}|\ln\va|^\theta -B_3\va^2 e^{-|\ln\va|}\nonumber\\
&\leq& F(\bar{{\bf {y}}}_\va)\leq F({\bf {y}}_\va)\nonumber\\
&\leq&A\va^2-C_2\va^2 \sum_{i=1}^k(V(x^0)-V(y^i)) -C_2\va^2 \sum_{i\neq j}^ke^{-\frac{|y^i-y^j|}{\va}}\nonumber\\
\end{eqnarray}
where $\bar{{\bf {y}}}_\va=(\bar{y}^1_\va,\cdots,\bar{y}^k_\va)$.

Employing \eqref{th1-2}, we deduce that for $\va$ sufficient small,
$$
A\va^2-C_1\va^{2+\theta}|\ln\va|^\theta\leq A\va^2-C_2\va^2\sum_{i=1}^k(V(x^0)-V(y^i)) -C_2\va^2\sum_{i\neq j}^ke^{-\frac{|y^i-y^j|}{\va}},
$$
i.e.
\begin{eqnarray*}
C_2\va^2 \sum_{i=1}^k(V(x^0)-V(y^i))+C_2\va^2\sum_{i\neq j}^ke^{-\frac{|y^i-y^j|}{\va}}
&\leq& C_1\va^{2 +\theta}|\ln\va|^\theta.
\end{eqnarray*}

That is,
$$
\sik \Bigl(V(y^i)-V(x^0)\Bigr)\leq C\va^{\theta}|\ln \va|^{\theta},
$$
$$
\sum\limits_{i\neq j}\frac{|y_\va^i-y_\va^j|}{\va}\geq \theta|\ln\va|\geq |\ln\va|^{\frac12}.
$$

This implies that ${\bf {y}}_\va$ is an interior point of $D_{\va,\delta}^k$ and hence is a critical point of $F({\bf {y}})$ for $\va$ sufficiently small.

Finally, by the standard argument and the strong maximum principle, we obtain that $u_{\va}=U_*+\varphi>0$.
\end{proof}

\section{Acknowledgements}{Long was supported by NSF of China (No. 11871253), NSF of Jiangxi Province (No. 20192ACB20012) and Jiangxi Two Thousand Talents Program. Yang was supported by NSF of China (No. 11671179 and 11771300).}

\end{document}